\newtheorem{thm}{Theorem}[section]
\theoremstyle{plain}
\newtheorem{lemma}[thm]{Lemma}
\theoremstyle{remark}
\newtheorem{rem}[thm]{Remarks}
\newcommand{\Tone}{\mathbb{T}}
\newcommand{\rs}{r_{\star}}
\newcommand{\trho}{\tilde{\rho}}
\newcommand{\brho}{\bar{r}}
\newcommand{\bG}{\bar{\mathcal{G}}}
\newcommand{\cH}{\mathcal{H}}
\newcommand{\tr}{\tilde{r}}
\numberwithin{equation}{section}
\begin{document}
\title[AMC Stability and Bifurcation]{Stability and Bifurcation of Equilibria for the Axisymmetric Averaged Mean Curvature Flow}
\author{Jeremy LeCrone}
\address
{Department of Mathematics \newline%
\indent Vanderbilt University, Nashville, TN USA}%
\email{jeremy.lecrone@vanderbilt.edu}%
\subjclass[2010]{Primary 35K93, 53C44 ; Secondary 35B35, 35B32} 
\keywords{Averaged mean curvature flow, periodic boundary conditions, maximal regularity, nonlinear stability, bifurcation}%

\begin{abstract}
We study the \emph{averaged mean curvature flow}, also called the 
\emph{volume preserving mean curvature flow}, in the particular setting of axisymmetric surfaces
embedded in $\mathbb{R}^3$ satisfying periodic boundary conditions. We establish analytic well--posedness 
of the flow within the space of little-H\"older continuous surfaces, given rough initial data. 
We also establish dynamic properties of equilibria, including stability, instability, and bifurcation behavior
of cylinders, where the radius acts as a bifurcation parameter.
\end{abstract}
\maketitle


\section{Introduction}

The averaged mean curvature flow is a second--order geometric evolution law, acting on closed, compact,
connected, sufficiently smooth hypersurfaces $\Gamma$ immersed in $\mathbb{R}^n$. The evolution of $\Gamma$ 
involves the (normalized) mean curvature $\cH(\Gamma)$, 
which is simply the sum of the principal curvatures on the surface. In particular, one seeks 
to find a one--parameter family of smooth, immmersed, orientable hypersurfaces $\{ \Gamma(t) : t \ge 0 \}$ which 
satisfy the evolution equation
\begin{equation}\label{MCF}
\begin{cases} V(t) = h(\Gamma(t)) - \cH(\Gamma(t)), &\text{$t > 0,$}\\ \Gamma(0) = \Gamma_0, \end{cases}
\end{equation}
where $V(t)$ denotes the normal velocity of the hypersurface $\Gamma(t)$, $\Gamma_0$ is a given initial surface, and
$h(\Gamma)$ is the integral average of the mean curvature $\cH(\Gamma)$ on $\Gamma$. Two important features of the
flow \eqref{MCF} are that the surface area of $\Gamma(t)$ is decreasing in $t$ and the (signed) volume enclosed by 
the surface $\Gamma(t)$ is preserved, as long as smooth solutions exist. These features provide a starting
point for our analysis of the dynamical properties of solutions.

Problem \eqref{MCF} is a modification of the mean curvature flow, which is the related evolution equation  
without the integral average term $h(\Gamma)$ in the governing equation. These two problems have a long and 
rich history of investigation which we do not attempt to summarize here. We will simply 
highlight some of the important historical results which bear on our current investigation. 

Both the mean curvature and averaged mean curvature 
flows have been considered in a wide range of settings for $\Gamma$. 
Cases considered include curves in $\mathbb{R}^2$, \cite{GAG86, GH86}, 
hypersurfaces immersed in $\mathbb{R}^n$, \cite{ATH97, ES98, HUI87, MS00},
and immersed submanifolds of smooth Riemannian manifolds $(\mathcal{M}^n, g)$,
\cite{HP99}, for arbitrary dimensions $n \ge 2$. 

The first well--posedness results for \eqref{MCF} were established by Gage \cite{GAG86}, 
in the setting of plane curves $\gamma \subset \mathbb{R}^2$.
Under convexity assumptions on the initial curve $\gamma_0$, 
it is shown that solutions are global, 
convex for all time $t > 0,$ 
and solutions converge to a circle.
These results were later generalized by Huisken \cite{HUI87}, 
who proved existence of global solutions assuming 
smooth uniformly convex initial hypersurfaces in $\mathbb{R}^n$.
It is also shown in this setting that solutions converge to an $n$--dimensional sphere. 
Escher and Simonett \cite{ES98} later proved well--posedness
for rough initial data (in the little H\"older spaces $h^{1 + \beta}$), 
including the possibility of nonconvex hypersurfaces, and further proved that the family
of $n$--dimensional spheres is locally exponentially attractive in the topology of $h^{1 + \beta}$. 
Regarding further qualitative properties of solutions, 
Mayer and Simonett \cite{MS00} proved the first analytic results regarding
hypersurfaces evolving according to \eqref{MCF} which lose embeddedness.

Considering axisymmetric, or rotationally symmetric, surfaces, 
several authors have investigated both the mean curvature and 
averaged mean curvature flows in this setting, c.f. \cite{ATH97, CRM09}
and references therein. 
Of particular note is a result of Athanassenas \cite{ATH97}, 
in the setting of smooth surfaces satisfying Neumann boundary conditions,
regarding global existence and convergence to a cylinder for given
initial surfaces satisfying an isoperimetric--type inequality. 
A recent article of Hartley \cite{HAR12} refines this result to rough initial data, 
also allowing for non--rotationally symmetric initial
surfaces. An exponential convergence rate is also established, 
given initial data $h^{1 + \beta}$--close to a cylinder, 
using techniques similar to \cite{ES98}. 

In the current article, we develop a function space setting adequate 
to establish well--posedness of the axisymmetric averaged mean curvature 
flow with rough initial data and to investigate geometric properties
of solutions analytically. The methods of the paper also allow for application
to higher--order nonlinear problems. For example the author and
Simonett \cite{LS12} recently applied similar methods to derive
results for the surface diffusion flow, a fourth--order geometric
evolution law, also in the setting of axisymmetric surfaces 
with periodic boundary conditions.

We conclude the introduction with a brief outline of the article.
In Section~\ref{Sec:Notation}, we define appropriate spaces and 
introduce analytic tools that will be used throughout the paper. 
In Section~\ref{Sec:WellPosedness}, we take advantage of
maximal regularity properties for the governing operator of \eqref{AMC}, 
in conjunction with a quasilinear structure, 
in order to establish well--posedness and regularity of solutions. 
We are then able to explicitly characterize all equilibria 
(stationary solutions) to the problem in Section~\ref{Sec:Equilibria}, 
using results regarding closed surfaces of revolution with constant mean curvature. 

After characterizing equilibria, the remainder of the paper is 
dedicated to investigating properties of two particular families of equilibria, 
namely the cylinders of radius $\rs > 0$ and families 
of $\frac{2 \pi}{k}$--periodic unduloids. In Section~\ref{Sec:Stability}, 
we develop nonlinear stability and instability results for cylinders, 
where the size of the radius governs the stability of the cylinder. 
One will see in this section how maximal regularity provides a crucial 
connection to tools in nonlinear functional analysis,
including applications of the Implicit Function Theorem on Banach spaces. 
We conclude the article with an investigation of bifurcations of equilibria, 
in particular we use analytic methods to establish the qualitative structure of 
the intersections between the families of unduloids and the family of cylinders.

\section{Notation and Conventions}\label{Sec:Notation}


For the remainder of the paper, we consider the case of $\Gamma \subset \mathbb{R}^3$ an embedded
surface which is symmetric about an axis of rotation (which we take to be the $x$--axis,
without loss of generality) and satisfies prescribed periodic boundary conditions on some fixed
interval $L$ of periodicity (we take $L = [-\pi, \pi]$ and enforce $2 \pi$ periodicity,
without loss of generality). In particular, the axisymmetric surface $\Gamma$  
is characterized by the parametrization
\[
\Gamma = \Big\{ (x, r(x) \cos(\theta), r(x) \sin(\theta)): \; x \in \mathbb{R}, \; \theta \in [-\pi, \pi] \Big\},
\]
where the function $r : \mathbb{R} \rightarrow (0, \infty)$ is the \emph{profile function}
for the surface $\Gamma$. Conversely, a profile function $r: \mathbb{R} \rightarrow (0, \infty)$
generates an axisymmetric surface $\Gamma = \Gamma(r)$ via the parametrization given above. 
Utilizing this explicit parametrization for axisymmetric surfaces, we can recast the averaged
mean curvature flow as an evolution equation for the time--dependent profile functions $r = r(t,x)$. 

Given a surface $\Gamma(r)$, it follows that the mean curvature is $\mathcal{H}(r) = \kappa_1 + \kappa_2$, where
\[
\kappa_1 = \frac{1}{r \sqrt{1 + r_x^2}} \quad \text{and} \quad \kappa_2 = \frac{- r_{xx}}{(1 + r_x^2)^{3/2}}
\]
are the \emph{azimuthal} and \emph{axial} principle curvatures, respectively. 
Meanwhile, the normal velocity of $\Gamma = \Gamma(t)$ is
\[
V(t) = \frac{r_t}{\sqrt{1 + r_x^2}},
\]
and, introducing the surface area functional 
\begin{equation}\label{Eqn:S}
S(r) := \int_{-\pi}^{\pi} r(x) \sqrt{1 + r_x^2(x)} dx,
\end{equation}
which measures the surface area of one period of the induced 
axisymmetric surface $\Gamma(r)$ (modulo a factor of $2 \pi$),
the integral average of the mean curvature is
\begin{equation}\label{Eqn:h}
h(r) = \frac{1}{S(r)} \int_{-\pi}^{\pi} \cH(r)(x) r(x) \sqrt{1 + r_x^2(x)} dx.
\end{equation} 
Defining the operator
\begin{equation}\label{Eqn:G}
G(r) := \sqrt{1 + r_x^2} \; \Big[ h(r) - \cH(r) \Big],
\end{equation}
we arrive at the expression
\begin{equation}\label{AMC}
\begin{cases} r_t(t,x) = G(r(t))(x), &\text{$x \in \Tone, \; t > 0$}\\ r(0) = r_0, &\text{$x \in \Tone,$} \end{cases}
\end{equation}
for the periodic axisymmetric averaged mean curvature flow,
where $\Tone := [-\pi, \pi]$ is the one-dimensional torus; with the points
$-\pi$ and $\pi$ identified, endowed with the topology generated
by the metric
\[
d_{\Tone}(x,y) := \min \{|x - y|, 2 \pi - |x - y| \}, \qquad x,y \in \Tone.
\]

There is a natural relation between functions defined on $\Tone$ and $2 \pi$--periodic functions on 
$\mathbb{R}$, which the author explores in detail in the article \cite{LeC11}. Throughout the article, we
will consider profile functions in the periodic little--H\"older spaces $h^{k + \alpha}(\Tone),$ which
are Banach spaces for $k \in \mathbb{N}_0 := \{ 0, 1, 2, \ldots \}$, $\alpha \in (0, 1)$, defined by
\begin{align*}
h^{k + \alpha}(\Tone) &:= \overline{C^{\infty} (\Tone)}^{\| \cdot \|_{C^{k + \alpha}(\Tone)}} \qquad \text{and}\\
\| r \|_{h^{k + \alpha}} &:= \| r \|_{C^{k + \alpha}(\Tone)} := \sum_{j = 0}^{k} \max_{x \in \Tone} |r^{(j)}(x)| + \sup_{\substack{x, y \in \Tone\\ x \not= y}} \frac{|r^{(k)}(x) - r^{(k)}(y)|}{d_{\Tone}^{\alpha}(x,y)},
\end{align*}
i.e. the closure of the smooth functions in the topology of the classic H\"older functions $C^{k + \alpha}$
over $\Tone$. If $\sigma \in \mathbb{R}_+ \setminus \mathbb{Z}$ is fixed, then we denote by 
$h^{\sigma}(\Tone)$ the little--H\"older space $h^{\lfloor \sigma \rfloor + \{ \sigma \}}(\Tone)$,
where $\lfloor \sigma \rfloor$ denotes the largest integer not exceeding $\sigma$ and 
$\{ \sigma \} := \sigma - \lfloor \sigma \rfloor$. 

In addition to functions defined on the one--dimensional torus, 
we consider standard classes of regular functions between Banach spaces. 
In particular, given Banach spaces $E,$$F$, and an open set $U \subset E$, 
we denote by $C^k(U,F)$ the class of $k$--times (Fr\'echet) 
differentiable functions mapping $U$ into $F$. We also denote by
$C^{\omega}(U,F)$ the class of real--analytic functions, which are
representable (in the topologies of $E$ and $F$) as a power series of $k$--linear maps
from $E$ into $F$, c.f. \cite{DE85}.

\subsection{Maximal Regularity}\label{MaxReg}

One essential tool that we use throughout the paper is the property of maximal 
regularity, also called \emph{optimal regularity} in the literature. Maximal regularity
has received considerable attention in connection with nonlinear
parabolic partial differential equations, 
c.f. \cite{AM95, AM05, AN90, CS01, LUN95, Pru03, PSZ09}. 
Although maximal regularity can be developed in a 
more general setting, we will focus on the setting of \emph{continuous} maximal regularity and 
direct the interested reader to the references \cite{AM95, LUN95} for a general development of the theory.

Let $\mu \in (0,1], \, J := [0,T]$, for some $T > 0$, and let $E$ be a (real or complex) Banach space. 
Following the notation of \cite{CS01}, we define spaces of continuous functions on $\dot{J} := J \setminus \{ 0 \}$ 
with prescribed singularity at 0 as
\begin{equation}\label{Eqn:SingularContinuity}
\begin{aligned}
&BU\!C_{1 - \mu}(J,E) := \bigg\{ u \in C(\dot{J}, E): [t \mapsto t^{1 - \mu} u(t)] \in BU\!C(\dot{J},E) \; \text{and} \\ 
& \hspace{9em} \lim_{t \rightarrow 0^+} t^{1 - \mu} \| u(t) \|_E = 0 \bigg\}, \quad \mu \in (0,1)\\
&\| u \|_{B_{1 - \mu}} := \sup_{t \in J} t^{1 - \mu} \| u(t) \|_E,
\end{aligned}
\end{equation}
where $BU\!C$ denotes the space consisting of bounded, uniformly continuous functions.
We also define the subspace
\[
BU\!C_{1 - \mu}^1(J,E) := \left\{ u \in C^1(\dot{J},E) : u, \dot{u} \in BU\!C_{1 - \mu}(J, E) \right\}, \quad \mu \in (0,1)
\]
and we set
\[
BU\!C_0 (J,E) := BU\!C(J,E), \qquad BU\!C^1_0(J,E) := BU\!C^1(J,E).
\]
If $J = [0, a)$ for $a > 0$, 
then we set
\begin{align*}
C_{1 - \mu}(J,E) &:= \{ u \in C(\dot{J},E): u \in BU\!C_{1 - \mu}([0,T],E), \quad T < \sup J \},\\
C^1_{1 - \mu}(J,E) &:= \{ u \in C^1(\dot{J},E): u, \dot{u} \in C_{1 - \mu}(J,E) \}, \qquad \mu \in (0,1],
\end{align*}
which we equip with the natural Fr\'echet topologies induced by 
$BU\!C_{1 - \mu}([0,T],E)$ and $BU\!C_{1 - \mu}^1([0,T],E)$, respectively.

If $E_1$ and $E_0$ are a pair of Banach spaces such that $E_1$ is continuously embedded in $E_0$,
denoted $E_1 \hookrightarrow E_0$, we set
\begin{equation}\label{Eqn:MaxRegSpaces}
\begin{split}
&\mathbb{E}_0(J) := BU\!C_{1 - \mu}(J,E_0), \qquad \mu \in (0,1],\\
&\mathbb{E}_1(J) := BU\!C^1_{1 - \mu}(J,E_0) \cap BU\!C_{1 - \mu}(J,E_1),
\end{split}
\end{equation}
where $\mathbb{E}_1(J)$ is a Banach space with the norm 
\[
\| u \|_{\mathbb{E}_1(J)} := \sup_{t \in \dot{J}} t^{1 - \mu} \Big( \| \dot{u}(t) \|_{E_0} + \| u(t) \|_{E_1} \Big).
\]
It follows that the trace operator $\gamma: \mathbb{E}_1(J) \rightarrow E_0$, defined by 
$\gamma v := v(0)$, is well-defined and we denote by $\gamma \mathbb{E}_1$ the image of 
$\gamma$ in $E_0$, which is itself a Banach space when equipped with the norm
\[
\| x \|_{\gamma \mathbb{E}_1} := \inf \Big\{ \| v \|_{\mathbb{E}_1(J)}: v \in \mathbb{E}_1(J) \, \text{and} \, \gamma v = x \Big\}.
\]

Given $B \in \mathcal{L}(E_1, E_0)$, 
closed as an operator on $E_0$, 
we say $\big( \mathbb{E}_0(J), \mathbb{E}_1(J) \big)$ 
is a \emph{pair of maximal regularity} for $B$ and write $B \in \mathcal{MR}_{\mu}(E_1, E_0)$, if 
\[
\left( \frac{d}{dt} + B, \, \gamma \right) \in \mathcal{L}_{isom}(\mathbb{E}_1(J), \mathbb{E}_0(J) \times \gamma \mathbb{E}_1), \qquad \mu \in (0, 1),
\]
where $\mathcal{L}_{isom}$ denotes the space of bounded linear isomorphisms. In particular,  
$B \in \mathcal{MR}_{\mu}(E_1, E_0)$ 
if and only if for every $(f, u_0) \in \mathbb{E}_0(J) \times \gamma \mathbb{E}_1$, there 
exists a unique solution $u \in \mathbb{E}_1(J)$ to the inhomogeneous Cauchy problem 
\[
\begin{cases}
\dot{u}(t) + Bu(t) = f(t), &t \in \dot{J},\\
u(0) = u_0.
\end{cases}
\]
Moreover, in the current setting, it follows that $\gamma \mathbb{E}_1 \, \dot{=} \, (E_0, E_1)_{\mu, \infty}^0$,
i.e. the trace space $\gamma \mathbb{E}_1$ is topologically equivalent to the noted continuous 
interpolation spaces of Da Prato and Grisvard, c.f. \cite{AM95, CS01, DPG79, LUN95}.

\section{Well--Posedness of \eqref{AMC}}\label{Sec:WellPosedness}

Well--posedness of the averaged mean curvature 
flow is well established in the literature, we reference the work of Escher and Simonett 
\cite{ES98} for fundamental local well--posedness in arbitrary space dimensions with rough 
initial data, and we also reference Anathanessas \cite{ATH97}, 
who considers rotationally symmetric surfaces in arbitrary space dimenions 
$\mathbb{R}^n$, satisfying Neumann boundary conditions. 
In the current periodic setting in $\mathbb{R}^3$, 
we establish the following well--posedness of \eqref{AMC}.

\begin{thm}\label{Thm:WellPosedness}
Fix $\alpha \in (0,1)$ and take $\mu \in [1/2, 1]$ so that $2 \mu + \alpha \notin \mathbb{Z}$.
For every initial value $r_0 \in V_{\mu} := h^{2 \mu + \alpha}(\Tone) \cap [ r > 0]$, there exists
a unique solution to \eqref{AMC},
\[
r(\cdot, r_0) \in C_{1 - \mu}^1(J(r_0), h^{\alpha}(\Tone)) \cap C_{1 - \mu}(J(r_0), h^{2 + \alpha}(\Tone)),
\]
on the maximal interval of existence $J(r_0) := [0, t^+(r_0))$. Further, we conclude that
\begin{enumerate}
	\item solutions have the additional regularity 
	\[
	r(\cdot, r_0) \in C^{\omega}((0, t^+(r_0)) \times \Tone) \qquad \text{for all} \quad r_0 \in V_{\mu}.
	\]
	
	\item \eqref{AMC} generates a real--analytic semiflow on $V_{\mu}$.
	
	\medskip
	
	\item if there exists $0 < M < \infty$ so that the bounds
		\begin{align*}
			\bullet& \quad r(t, r_0)(x) \ge 1/M, \quad \text{for all} \quad x \in \Tone, \quad \text{and}\\
			\bullet& \quad \| r(t,r_0) \|_{h^{\theta}} \leq M, \quad \text{for some} \quad \theta \in (1 + \alpha, \infty) \setminus \mathbb{Z},
		\end{align*}
		hold for all $t \in J(r_0),$ then $t^+(r_0) = \infty$.
\end{enumerate}
\end{thm}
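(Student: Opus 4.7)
The plan is to recast \eqref{AMC} as a quasilinear parabolic Cauchy problem and apply the continuous maximal regularity framework of [CS01] in the little-H\"older scale on $\Tone$. Expanding $G$ via the formulas for $\kappa_1, \kappa_2$,
\[
G(r) = \frac{r_{xx}}{1+r_x^2} - \frac{1}{r} + \sqrt{1+r_x^2}\,h(r),
\]
and setting
\[
A(r) := -\frac{1}{1+r_x^2}\,\partial_x^2, \qquad F(r) := -\frac{1}{r} + \sqrt{1+r_x^2}\,h(r),
\]
I identify $A(r)$ as a uniformly elliptic second-order operator whose coefficient is analytic in $r_x$, and $F(r)$ as the lower-order remainder. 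The scalar nonlocal term $h(r)$ is manageable at the rough regularity level $h^{2\mu + \alpha}$ because the integration-by-parts identity
\[
\int_{\Tone} \cH(r)\, r \sqrt{1+r_x^2}\, dx \;=\; 2\pi + \int_{\Tone} r_x \arctan(r_x)\, dx
\]
reduces the numerator in \eqref{Eqn:h} to an expression requiring only $r_x \in C(\Tone)$. Combined with the fact that pointwise multiplication and composition with the real-analytic functions $s \mapsto 1/s$ (on $s > 0$) and $s \mapsto \sqrt{1+s^2}$ act analytically on the multiplicative algebra $h^{\beta}(\Tone)$ for every $\beta > 0$, one concludes
\[
A \in C^{\omega}(V_{\mu}, \mathcal{L}(h^{2+\alpha}(\Tone), h^{\alpha}(\Tone))), \qquad F \in C^{\omega}(V_{\mu}, h^{\alpha}(\Tone)).
\]

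For each frozen $\tr \in V_{\mu}$, the operator $A(\tr)$ is strictly elliptic of second order with coefficients in $h^{\alpha}(\Tone)$, so after an appropriate shift it generates an analytic semigroup and lies in $\mathcal{MR}_{\mu}(h^{2+\alpha}(\Tone), h^{\alpha}(\Tone))$ by the standard theory of elliptic operators in little-H\"older spaces (see [LUN95]). The trace space satisfies
\[
\gamma \mathbb{E}_1 \;\doteq\; (h^{\alpha}(\Tone), h^{2+\alpha}(\Tone))_{\mu, \infty}^{0} \;\doteq\; h^{2\mu + \alpha}(\Tone),
\]
valid since $2\mu + \alpha \notin \mathbb{Z}$, so $V_{\mu}$ is precisely the admissible open set of initial data. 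The analytic quasilinear existence theorem in the continuous maximal regularity scale then produces a unique maximal solution $r(\cdot, r_0) \in C^{1}_{1-\mu}(J(r_0), h^{\alpha}(\Tone)) \cap C_{1-\mu}(J(r_0), h^{2+\alpha}(\Tone))$, depending real-analytically on $r_0 \in V_{\mu}$, yielding the semiflow assertion (b).

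For the joint space-time analyticity (a), I would use the Escher--Pr\"uss--Simonett parameter trick: translation invariance of $G$ in $x$ and autonomy in $t$ imply that the rescaled family $r^{\lambda, \tau}(t, x) := r(\lambda t, x + \tau)$ satisfies $\partial_t r^{\lambda,\tau} = \lambda\, G(r^{\lambda,\tau})$ with initial datum $r_0(\cdot + \tau)$, whose right-hand side and initial datum depend analytically on $(\lambda, \tau)$ near $(1, 0)$. The analytic implicit function theorem applied to the corresponding parametrized quasilinear problem then shows that $(\lambda, \tau) \mapsto r^{\lambda, \tau}$ is a real-analytic map into $\mathbb{E}_1$, and evaluating $r(t_0, x_0) = r^{\lambda, \tau}(t_0/\lambda, x_0 - \tau)$ at any interior point $(t_0, x_0) \in (0, t^+(r_0)) \times \Tone$ transfers analyticity to the space-time variables.

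For the global criterion (c), I argue by contradiction. If $t^+(r_0) < \infty$, the standard continuation characterization in the quasilinear maximal regularity framework forces either $r(t)$ to leave every compact subset of $V_{\mu}$ or $\| r(t) \|_{h^{2\mu+\alpha}}$ to blow up as $t \nearrow t^+(r_0)$. The pointwise lower bound $r(t) \ge 1/M$ keeps $r(t)$ uniformly away from $\{r = 0\}$, while the uniform $h^{\theta}$-bound with $\theta > 1 + \alpha$, combined with interpolation between $h^{\alpha}$ and $h^{\theta}$ and parabolic smoothing on $(0, t^+)$, furnishes a uniform $h^{2\mu+\alpha}$-bound on $r(t)$. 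One then restarts the local existence theorem from $r(t)$ for $t$ close to $t^+$, obtaining a uniform existence time and extending the solution beyond $t^+$, contradicting maximality. The main obstacle in this program is the simultaneous treatment of the nonlocal functional $h(r)$ within the quasilinear structure: ensuring that $h(r)$ is analytic on the rough space $h^{2\mu+\alpha}(\Tone)$ (the integration-by-parts identity above is the technical device that resolves this) and that it does not disrupt the maximal regularity or trace-space identification for the principal part $A(r_0)$.
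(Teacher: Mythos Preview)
Your quasilinear decomposition is correct and genuinely different from the paper's. The paper keeps the second--order nonlocal piece of $h(r)$ inside the principal operator $\mathcal{A}(r)$,
\[
\mathcal{A}(r)\rho = \frac{\sqrt{1+r_x^2}}{S(r)}\int_{\Tone}\frac{r\,\rho_{xx}}{1+r_x^2}\,dx - \frac{\rho_{xx}}{1+r_x^2},
\]
and then shows the integral term is a lower--order perturbation via the interpolation inequality and Young's inequality. Your integration--by--parts identity $\int_{\Tone}\cH(r)\,r\sqrt{1+r_x^2}\,dx = 2\pi + \int_{\Tone} r_x \arctan(r_x)\,dx$ is valid and reveals that $h(r)$ is actually a first--order functional, so it can sit entirely in $F(r)$. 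This is cleaner: it avoids the perturbation argument for $\mathcal{A}(r)$ altogether and makes the analytic dependence of $(A,F)$ on $r \in V_{\mu}$ transparent. The paper also separates the case $\mu = 1$ and treats it via the fully nonlinear theory of Da~Prato--Grisvard and Angenent; you should at least flag that your argument covers $\mu = 1$ as well (it does, since $A$ and $F$ are defined on $V_{1/2} \supset V_1$), or defer to that framework.

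There is, however, a real gap in your parameter trick for (a). You claim that the initial datum $r_0(\cdot + \tau)$ depends analytically on $\tau$, but for $r_0 \in h^{2\mu + \alpha}(\Tone)$ the translation group $\tau \mapsto r_0(\cdot + \tau)$ is only strongly continuous, not differentiable, let alone analytic. Hence the implicit function theorem cannot be invoked for your map $(\lambda, \tau) \mapsto r^{\lambda, \tau}$. The paper avoids this by using a \emph{time--weighted} translation: it sets
\[
r_{\lambda, a}(t,x) := r((1+\lambda)t,\, x + a t),
\]
so that $r_{\lambda, a}(0,\cdot) = r_0$ is independent of the parameters. The modified equation then reads $\partial_t v = (1+\lambda)G(v) + a\,\partial_x v$, whose right--hand side is analytic in $(v, \lambda, a)$ with no parameter dependence in the initial condition; the implicit function theorem applies, and evaluating $r_{\lambda, a}(t_0, x_0) = r((1+\lambda)t_0, x_0 + a t_0)$ for fixed $t_0 > 0$ transfers analyticity to the space--time variables via the change of coordinates $(s,y) \mapsto (\lambda, a) = (s/t_0 - 1,\, (y - x_0)/t_0)$. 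Replacing your spatial shift $\tau$ by the time--dependent shift $a t$ fixes the argument completely.
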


To economize notation, we define the spaces, for $\alpha \in (0,1)$ fixed,
\begin{equation}\label{Eqn:LittleHolderE}
E_0 := h^{\alpha}(\Tone), \quad E_1 := h^{2 + \alpha}(\Tone), \quad \text{and} \quad
E_{\mu} := (E_0, E_1)_{\mu, \infty}^0, \quad \mu \in (0,1).
\end{equation}
Utilizing well--known results regarding the continuous interpolation
method $(\cdot, \cdot)_{\mu, \infty}^0$ and little--H\"older spaces,
we conclude 
\[
E_{\mu} = h^{2 \mu + \alpha}(\Tone) \quad \text{(up to equivalent norms),} \qquad \text{for} \quad 2 \mu + \alpha \notin \mathbb{Z},
\]
c.f. \cite{LeC11, LUN95}. 
Further, let $V$ be the set of functions $r: \Tone \rightarrow \mathbb{R}$ 
such that $r(x) > 0$ for all $x \in \Tone$ and define $V_{\mu} := V \cap E_{\mu}$ for $\mu \in [0,1]$.
Note that $V_{\mu}$ is an open subset of $E_{\mu}$ for all $\mu \in [0,1]$.

Before we prove the theorem, we reformulate the problem \eqref{AMC} in order to make explicit 
the quasilinear structure of the equation. In particular, we note that
\begin{align}\label{Eqn:QuasiStructure}
G(r) &= -\mathcal{A}(r) r + f(r) \quad \text{where} \quad (\mathcal{A}, f) : V_{1/2} \rightarrow \mathcal{L}(E_1, E_0) \times E_0,  \nonumber\\
\mathcal{A}(r) \rho &:= \frac{\sqrt{1 + r_x^2}}{S(r)} \int_{\Tone} \frac{r(x) \rho_{xx}(x)}{(1 + r_x^2(x))} \; dx - \frac{\rho_{xx}}{(1 + r_x^2)} \; , \\
f(r) &:= 2 \pi \; \frac{\sqrt{1 + r_x^2}}{S(r)} - \frac{1}{r} \; , \qquad \rho \in E_1, \; r \in V_{1/2} \,. \nonumber
\end{align}

\begin{lemma}\label{Lem:AandFRegularity}
Let $\mu \in [1/2, 1]$. Then
\[
(\mathcal{A}, f) \in C^{\omega} \bigg( V_{\mu} \,, \mathcal{MR}_{\nu}(E_1, E_0) \times E_0 \bigg), \qquad \nu \in (0,1].
\]
\end{lemma}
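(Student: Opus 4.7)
The statement splits cleanly into two independent claims, which I would handle separately. First, that the map $r \mapsto (\mathcal{A}(r), f(r))$ is real--analytic from $V_{\mu}$ into $\mathcal{L}(E_1, E_0) \times E_0$. Second, that for each fixed $r \in V_{\mu}$ and every $\nu \in (0,1]$ the operator $\mathcal{A}(r)$ belongs to $\mathcal{MR}_{\nu}(E_1,E_0)$. These two facts combine to give the desired conclusion because $\mathcal{MR}_{\nu}(E_1, E_0)$ is an open subset of $\mathcal{L}(E_1, E_0)$: the map $B \mapsto (\frac{d}{dt} + B, \gamma)$ is continuous from $\mathcal{L}(E_1, E_0)$ into $\mathcal{L}(\mathbb{E}_1(J), \mathbb{E}_0(J) \times \gamma \mathbb{E}_1)$, and the set of Banach space isomorphisms is open in the latter. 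Consequently any analytic map into $\mathcal{L}(E_1, E_0)$ whose image lies in $\mathcal{MR}_{\nu}$ is automatically analytic into $\mathcal{MR}_{\nu}$.

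For analyticity I would exploit that since $2\mu + \alpha \geq 1 + \alpha > 1$, the space $E_{\mu} = h^{2\mu + \alpha}(\Tone)$ is a Banach algebra continuously embedded in $C^1(\Tone)$. On the open set $V_{\mu}$ the quantities $r$, $1 + r_x^2$, and $S(r)$ are uniformly bounded below by positive constants, so standard composition arguments (combining analyticity of the scalar maps $t \mapsto 1/t$ on $(0, \infty)$ and $s \mapsto \sqrt{1 + s^2}$ on $\mathbb{R}$ with boundedness of pointwise multiplication on $E_{\mu}$, differentiation $E_{\mu} \to E_{\mu - 1/2}$, and the bounded linear integration functional) produce real--analytic maps $r \mapsto 1/r$, $r \mapsto \sqrt{1+r_x^2}$, and $r \mapsto 1/S(r)$ into $E_{\mu} \hookrightarrow E_0$. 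Since pointwise multiplication $E_0 \times E_0 \to E_0$ is bounded bilinear, these assemble $\mathcal{A}$ and $f$ into real--analytic maps with the required target spaces.

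The substantive work lies in verifying $\mathcal{A}(r) \in \mathcal{MR}_{\nu}(E_1, E_0)$. I would decompose $\mathcal{A}(r) = \mathcal{A}_0(r) + K(r)$, where
\[
\mathcal{A}_0(r)\rho := -\frac{\rho_{xx}}{1 + r_x^2}, \qquad K(r)\rho := \frac{\sqrt{1 + r_x^2}}{S(r)} \int_{\Tone} \frac{r\, \rho_{xx}}{1 + r_x^2}\, dx.
\]
The coefficient of $\mathcal{A}_0(r)$ lies in $h^{\alpha}(\Tone)$ and is uniformly bounded below by a positive constant, so classical results for uniformly elliptic operators with little--H\"older coefficients on the torus (c.f. \cite{LUN95}) yield that $\mathcal{A}_0(r)$ generates a strongly continuous analytic semigroup on $E_0$ with domain $E_1$. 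Integration by parts on $\Tone$ rewrites the integral defining $K(r)$ in terms of $\rho_x$ alone, so the rank--one functional inside $K(r)$ extends continuously to $h^{1 + \alpha}(\Tone)$; hence $K(r)$ factors through $h^{1+\alpha}$, which coincides (up to equivalent norm) with the interpolation space $(E_0, E_1)^0_{1/2, \infty}$. The resulting interpolation estimate combined with Young's inequality shows $K(r)$ is relatively bounded with respect to $\mathcal{A}_0(r)$ with arbitrarily small relative bound, so $\mathcal{A}(r)$ still generates a strongly continuous analytic semigroup on $E_0$ with domain $E_1$. The continuous maximal regularity theorem of Da Prato and Grisvard (c.f. \cite{AM95, DPG79, LUN95}) then delivers $\mathcal{A}(r) \in \mathcal{MR}_{\nu}(E_1, E_0)$ for every $\nu \in (0,1]$.

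The main obstacle is the perturbation step for $K(r)$. Viewed naively as a map $E_1 \to E_0$, $K(r)$ offers no gain over the principal part and preserving sectoriality is not automatic; the integration--by--parts identity is essential to reveal that $K(r)$ is genuinely of lower order, after which the interpolation estimate between $E_0$ and $E_1$ closes the small relative bound argument and the classical perturbation theory for analytic semigroups takes over.
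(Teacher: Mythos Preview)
Your overall strategy matches the paper's: decompose $\mathcal{A}(r)$ into the uniformly elliptic principal part $\mathcal{A}_0(r)$ and the rank--one integral term $K(r)$, then show $K(r)$ is a lower--order perturbation. The analyticity discussion is fine, and your remark that $\mathcal{MR}_\nu(E_1,E_0)$ is open in $\mathcal{L}(E_1,E_0)$ is a clean way to combine the two halves.

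The gap is in your integration--by--parts step for $K(r)$. At the endpoint $\mu = 1/2$ you only have $r \in h^{1+\alpha}(\Tone)$, so $r_x \in h^{\alpha}(\Tone)$ and the coefficient $a := r/(1+r_x^2)$ is merely $h^{\alpha}$; its derivative would involve $r_{xx}$, which does not exist. Thus the identity $\int_{\Tone} a\,\rho_{xx}\,dx = -\int_{\Tone} a_x\,\rho_x\,dx$ is unavailable, and your claim that the functional extends to $h^{1+\alpha}(\Tone)$ is not justified. In fact it is false in general for small $\alpha$: with a lacunary $a = \sum_{k\ge 1} 2^{-k\alpha}\cos(2^k x) \in C^{\alpha}$ and $\rho_n = 2^{-n(1+\alpha)}\cos(2^n x)$ (so $\|\rho_n\|_{C^{1+\alpha}} \sim 1$), one computes $\int_{\Tone} a\,\rho_n''\,dx = -\pi\,2^{n(1-2\alpha)}$, which is unbounded for $\alpha < 1/2$.

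The paper sidesteps this by a cruder bound that needs no regularity of $a$ beyond $L^\infty$: since $\big|\int_{\Tone} a\,\rho_{xx}\,dx\big| \le 2\pi\,\|a\|_\infty\,\|\rho\|_{C^2(\Tone)}$, the term $K(r)$ factors through $h^{2+\beta}(\Tone)$ for any $0 < \beta < \alpha$. Identifying $h^{2+\beta}(\Tone) = (E_0,E_1)_{\theta,\infty}^0$ with $\theta = 1 - (\alpha-\beta)/2 < 1$, the interpolation inequality together with Young's inequality yields arbitrarily small relative bound, and the perturbation argument closes. Replacing your $h^{1+\alpha}$ claim with this $h^{2+\beta}$ estimate repairs the proof; your route through analytic--semigroup perturbation followed by Da~Prato--Grisvard is then a legitimate alternative to the paper's direct appeal to a maximal regularity result for the principal part and a perturbation lemma at the level of $\mathcal{MR}_\nu$.
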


\begin{proof}
The regularity of the mappings is standard from techniques in nonlinear analysis, 
noting that every $r \in V_{\mu}$ is strictly positive on $\Tone$ and
$V_{\mu} \hookrightarrow h^{1 + \alpha}(\Tone)$ for $\mu \in [1/2, 1]$. Hence, we focus
on the maximal regularity result. In particular, for $r \in V_{\mu}$ fixed, it remains to 
show that $\mathcal{A}(r) \in \mathcal{MR}_{\nu}(E_1, E_0), \; \nu \in (0, 1].$ 

Separating the terms of the operator $\mathcal{A}(r),$ it follows from \cite[Theorem 5.2]{LeC11} 
that the second--order uniformly elliptic operator with variable coefficients,
\[
-\frac{1}{(1 + r_x^2)} \; \partial_x^2 \, ,
\]
is in the maximal regularity class $\mathcal{MR}_{\nu}(E_1, E_0), \; \nu \in (0, 1].$
Meanwhile, we note that
\begin{align*}
&\left\| \frac{\sqrt{1 + r_x^2}}{S(r)} \int_{\Tone} \frac{r(x)}{(1 + r_x^2(x))} \; \rho_{xx}(x) \; dx \right\|_{E_0} 
\le C(r) \int_{\Tone} | \rho_{xx}(x) | \; dx
\le 2 \pi \, C(r) \| \rho \|_{C^2(\Tone)}\\
&\hspace{3em}\Longrightarrow \quad \left[ \rho \mapsto \frac{\sqrt{1 + r_x^2}}{S(r)} \int_{\Tone} \frac{r(x)}{(1 + r_x^2(x))} \; \rho_{xx}(x) \, dx \right] \in \mathcal{L}(h^{2 + \beta}(\Tone), E_0),
\end{align*}
for any $0 < \beta < \alpha$, $r \in V_{1/2}$, where $C(r) > 0$ depends on the 
$h^{1 + \alpha}(\Tone)$--norm of $r$ and $\min_{x \in \Tone} r(x)$. 
Thus, from the interpolation inequality, in conjunction with the identity
$h^{2 + \beta}(\Tone) = (E_0, E_1)_{\theta, \infty}^0$, $\theta = 1 - \frac{\alpha - \beta}{2}$,
and an application of Young's inequality, 
we realize this term as a lower order perturbation, c.f. \cite[p. 2793]{ES98}.
The operator $\mathcal{A}(r)$ then satisfies maximal regularity
by a well--known perturbation result, c.f. \cite{AM95, ES98, LeC11}. 
\end{proof}

\begin{proof}[Proof of Theorem~\ref{Thm:WellPosedness}]
For $\mu \in [1/2, 1),$ the main statement of the theorem, and parts {\em(b)} and {\em(c)}, follow from 
the results \cite{CS01} of Cl\'ement and Simonett regarding well--posedness of quasilinear problems, 
in the presence of sufficiently regular mappings $\mathcal{A}$ and $f$, and maximal regularity properties, 
as established in Lemma~\ref{Lem:AandFRegularity}. Meanwhile, when $\mu = 1$, we must consider \eqref{AMC}
as a fully nonlinear problem, for which DaPrato and Grisvard \cite{DPG79} have established well--posedness
and Angenent \cite{AN90} has established semiflow properties in the presence of 
maximal regularity properties for the Fr\'echet derivatives $DG(\rho)$.
Indeed, computing the Fr\'echet derivative $DG(\rho),$ one sees the structure  
\[
DG(r) \rho = \frac{\rho_{xx}}{(1 + r_x^2)} - \frac{\sqrt{1 + r_x^2}}{S(r)} \int_{\Tone} \frac{\rho_{xx}}{(1 + r_x^2)^{3/2}} \; dx + \mathcal{B}(r) \rho, 
\] 
for $\rho \in E_1$, where $\mathcal{B}(r)$ is a first--order differential operator. Hence, we apply
the same perturbation technique as in the proof of Lemma~\ref{Lem:AandFRegularity} to establish maximal
regularity properties for $DG(r), \; r \in V_{1}.$ 

To prove the additional regularity of solutions, part {\em (a)} of the theorem, we rely on a 
\emph{paramater--trick} that goes back to Masuda \cite{Mas80} and Angenent \cite{AN90, An90b},
where one can introduce parameters and apply the implicit function theorem to obtain 
regularity results for solutions, see also \cite{EPS03, ES96, LS12}.
In particular, we define the translation operator $T_a: \Tone \rightarrow \Tone$ which 
takes $x \in \Tone$ to the unique element of $\Tone$ which resides in the coset
$[x + a] \in \mathbb{R} / 2 \pi \mathbb{Z}$, for $a \in \mathbb{R}$.
Given a solution $r(\cdot, r_0)$ to \eqref{AMC}, for some $r_0 \in V_{\mu}$,
we take $t_1 \in (0, t^+(r_0))$ and consider the function
\[
r_{\lambda, a}(t, x) := r((1 + \lambda)t, r_0)(T_{ta} x), \qquad t \in I := [0, t_1], \, x \in \Tone,
\]
for $(\lambda, a) \in (- \delta, \delta)^2$, with $\delta > 0$ chosen sufficiently small.
It follows that $r_{\lambda, a} \in \mathbb{E}_1(I)$  and is a zero of the operator 
$\Phi: (\mathbb{E}_1(I) \cap C(I, V)) \times (-\delta, \delta)^2 \rightarrow \mathbb{E}_0(I) \times E_{\mu},$
\[
\Phi(v, (\lambda, a)) = (\partial_t v - (1 + \lambda)G(v) - a \partial_x v, \gamma v - r_0),
\]
where the spaces $\mathbb{E}_j(I)$ are defined as in \eqref{Eqn:MaxRegSpaces} above.
One quickly verifies that $\Phi$ is real--analytic on the domain specified 
and further, the Fr\'echet derivative of $\Phi$ at $r$ satisfies
\[
D_1 \Phi(r, (0, 0)) = \left( \partial_t - DG(r), \gamma \right) \in \mathcal{L}_{isom}(\mathbb{E}_1(I), \mathbb{E}_0(I) \times E_{\mu})
\]
by way of maximal regularity, c.f. \cite[Theorem 6.1]{CS01}.
The result now follows by an application of the implicit function theorem,
c.f. \cite[Theorem 2.4]{LS12}.
\end{proof}

\section{General Properties of \eqref{AMC}}\label{Sec:Equilibria}

We move on to investigate geometric properties of solutions. 
To begin, we verify that two important features of the 
averaged mean curvature flow hold for solutions of \eqref{AMC}, 
namely volume--preservation and surface--area reduction.
Following these verifications, we characterize the collection
of equilibria to \eqref{AMC}, which will be the central focus 
of the remainder of the article.

\subsection{Volume--Preservation and Surface--Area Reduction}

Suppose that $r \in C^1_{1 - \mu}(J(r_0), E_0) \cap C_{1 - \mu}(J(r_0), E_1)$ 
is a solution to \eqref{AMC} for some initial value $r_0 \in V_{\mu}$, $\mu \in [1/2, 1]$.
Looking at the evolution of the surface--area functional $S$, making
judiscious use of integration by parts and periodicity to cancel boundary terms,
we see that
\begin{align*}
\frac{d}{dt} S(r) &= \int_{\Tone} \left( \sqrt{1 + r_x^2(x)} + \frac{r(x) r_x(x)}{\sqrt{1 + r_x^2(x)}} \partial_x \right) G(r) \; dx\\
									&= \int_{\Tone} \cH(r)(x) \Big[ h(r) - \cH(r)(x) \Big] r(x) \sqrt{1 + r_x^2(x)} \; dx\\
									&= -\int_{\Tone} \Big[ h(r) - \cH(r)(x) \Big]^2 r(x) \sqrt{1 + r_x^2(x)} \; dx,
\end{align*}
where the last equation follows from 
\begin{align*}
\int_{\Tone} &h(r) \Big[ h(r) - \cH(r)(x) \Big] r(x) \sqrt{1 + r_x^2(x)} \; dx\\ 
									&= \Big[ h(r) \Big]^2 \left( \int_{\Tone} r(x) \sqrt{1 + r_x^2(x)} \; dx \right) - h(r) \left( \int_{\Tone} \cH(r)(x) r(x) \sqrt{1 + r_x^2(x)} \; dx \right)\\
									&= S(r) \Big[ h(r) \Big]^2 - S(r) \Big[ h(r) \Big]^2 = 0.
\end{align*}
Thus, it follows that the surface area of the induced surfaces of
revolution $\Gamma(r(t))$ is decreasing in $t$. In fact, 
we can see that the surface area is constant if and only if the 
mean curvature is constant, which coincides with an equilibrium surface.
Meanwhile, looking at the evolution of the volume functional
\[
F(r) := \int_{\Tone} r^2(x) \; dx,
\]
we see that
\begin{align*}\label{Eqn:VolumePreserved}
\frac{d}{dt} F(r) = 2 \int_{\Tone} r(x) G(r)(x) \; dx = 2 \int_{\Tone} r(x) \sqrt{1 + r_x^2(x)} (h(r) - \cH(r)(x)) \; dx = 0
\end{align*}
Therefore, taking into account the regularity of the operator $F$ in
the topology of $E_{\mu}$, it follows that $F(r(t)) = F(r_0)$ for
all $t \in J(r_0)$.

\subsection{Characterization of Equilibria}

Considering the equilibria, i.e. the steady states, of the problem \eqref{AMC},
it follows immediately that $\brho \in E_{\mu}$ is an equilibrium if
and only if $G(\brho) = 0$. Moreover, from the structure of the equation
we conclude $G(\brho) = 0$ if and only if $\cH(\brho) \equiv h(\brho),$ 
i.e. the mean curvature of the surface of revolution $\Gamma(\brho)$
must be constant on $\Tone$. 
Using results of Delaunay \cite{DE41} and Kenmotsu \cite{KEN80} regarding
surfaces of revolution with constant mean curvature, 
we characterize all equilibria of \eqref{AMC} as follows.

\begin{thm}[Characterization of Equilibria]
Fix $\alpha \in (0,1)$. Then $\brho \in h^{2 + \alpha}(\Tone)$ is an equilibrium
of the flow \eqref{AMC} if and only if $\brho$ is a $2 \pi$--periodic undulary curve.
Moreover, up to a translation along the axis of rotation, $\brho$ can be expressed 
explicitly by the parametric equation (with respect to the arclength parameter s),
\[
R(s; \mathcal{H}, B) := \Bigg( \int_{\pi / 2 \mathcal{H}}^s \frac{1 + B \sin ( \mathcal{H} t)}{\sqrt{1 + B^2 + 2 B \sin ( \mathcal{H} t)}} \; dt \, ,
\frac{\sqrt{1 + B^2 + 2 B \sin ( \mathcal{H} s)}}{ \, |\mathcal{H}|} \Bigg),
\]
for constants $\cH > 0$ and $B \in (-1, 1)$ which satisfy the relationship
\begin{equation}\label{Eqn:HandBRelation}
\frac{\pi \, \mathcal{H}}{k} = \int_{\pi/2}^{3 \pi/2} \frac{1 + B \sin t}{\sqrt{1 + B^2 + 2B \sin t}} \; dt \,, \qquad \text{for some} \quad k \in \mathbb{N}.
\end{equation}
\end{thm}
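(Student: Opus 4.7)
The plan is to reduce $G(\brho) = 0$ to the constant mean curvature condition, then invoke the classical Delaunay--Kenmotsu classification to produce the explicit parametrization, and finally extract \eqref{Eqn:HandBRelation} from the periodicity demand on $\Tone$. First I would observe that $\sqrt{1 + r_x^2} \geq 1$ pointwise, so by the factored form \eqref{Eqn:G} of $G$, the vanishing of $G(\brho)$ on $\Tone$ is equivalent to $\cH(\brho)(x) = h(\brho)$ for every $x$. Since $h(\brho) \in \mathbb{R}$ is $x$-independent, this says $\cH(\brho) \equiv \mathcal{H}$ is constant; the reverse implication is immediate from the definition \eqref{Eqn:h} of $h$ as the weighted average of $\cH$. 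Hence the problem reduces to classifying all smooth, strictly positive, $2\pi$-periodic profile functions $\brho : \mathbb{R} \to (0, \infty)$ whose surface of revolution has constant mean curvature.

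Next I would apply Delaunay's classical classification \cite{DE41} of complete CMC surfaces of revolution in $\mathbb{R}^3$: any such surface is, up to rigid motion, a plane, sphere, catenoid, cylinder, unduloid, or nodoid. Among these, only cylinders and unduloids (with cylinders recovered as the degenerate case $B = 0$) produce a smooth strictly positive graph over $\mathbb{R}$. Indeed, planes and spheres are not graphs of $r$ over the entire $x$-axis; catenoids have $\mathcal{H} = 0$ with a $\cosh$-type profile, which is not periodic; and nodoids are merely immersed, with their profile curves self-overlapping in $x$ so as not to define a function. Kenmotsu's formula \cite{KEN80} supplies the arclength parametrization $R(s;\mathcal{H},B)$ for the remaining surfaces, and the restriction $B \in (-1, 1)$ precisely ensures $1 + B \sin(\mathcal{H} s) > 0$, which makes the $x$-component of $R$ strictly monotone in $s$ and hence globally invertible, recovering $\brho$ as a genuine graph.

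It remains to convert the periodicity condition into \eqref{Eqn:HandBRelation}. Since $\sin(\mathcal{H} s)$ attains $+1$ at $\mathcal{H} s = \pi/2$ and $-1$ at $\mathcal{H} s = 3\pi/2$, the profile $\brho$ attains its maximum and subsequent minimum at these $s$-values; by the symmetry of $\sin$, the $x$-distance between these two extrema equals half of the fundamental $x$-period of $\brho$. For $\brho$ to descend to a function on $\Tone$, this fundamental period must equal $2\pi/k$ for some $k \in \mathbb{N}$, equivalently the half-period must equal $\pi/k$. Writing the half-period as the integral of the first component of $R$ from $\pi/(2\mathcal{H})$ to $3\pi/(2\mathcal{H})$ and substituting $u = \mathcal{H} t$ yields precisely \eqref{Eqn:HandBRelation}. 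The main obstacle is the bookkeeping required to exclude the non-graphical Delaunay surfaces and to verify the domain and monotonicity statements in Kenmotsu's parametrization; the periodicity computation itself is an elementary change of variables once that infrastructure is in hand.
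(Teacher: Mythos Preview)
Your proposal is correct and follows essentially the same approach as the paper: reduce to the constant mean curvature condition, invoke Delaunay's classification \cite{DE41} to isolate the unduloids as the only admissible family, and use Kenmotsu's parametrization \cite{KEN80}. Your version is in fact more detailed than the paper's, which simply cites Delaunay and Kenmotsu and states that the formula has been adjusted for $2\pi/k$-periodicity; your half-period computation and case-by-case exclusion of the other Delaunay surfaces make explicit what the paper leaves to the reader.
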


\begin{proof}
Delaunay \cite{DE41} proved that every closed surface of 
revolution in $\mathbb{R}^3$ with constant mean 
curvature is either a sphere, a catenoid, a nodoid or an unduloid. Looking at the 
associated profile curves, the only surfaces which fit into our 
current setting (taking into account periodicity, embeddedness and 
regularity constraints) are the family of $2 \pi$--periodic unduloids, 
for which Kenmotsu \cite{KEN80} derived an explicit 
parametrization of profile curves. 
The parametrization presented herein is a
slightly modified version of Kenmotsu's formula, where we have adjusted the graph to
be symmetric about the $y$--axis and to exhibit $\frac{2 \pi}{k}$--periodicity in the 
$x$--variable, for a given value $k \in \mathbb{N}$.
\end{proof}

To stimulate the reader's curiosity, we include here some graphs of undulary curves
$\brho$ which, by the previous theorem, are the only equilibria of \eqref{AMC} 
(up to shifts along the $x$--axis).


\begin{figure}[ht]
\centering
\mbox{\subfigure{\includegraphics[width=2.4in,height=1.5in,clip=true,trim=.5in 2.5in .5in 2.5in]{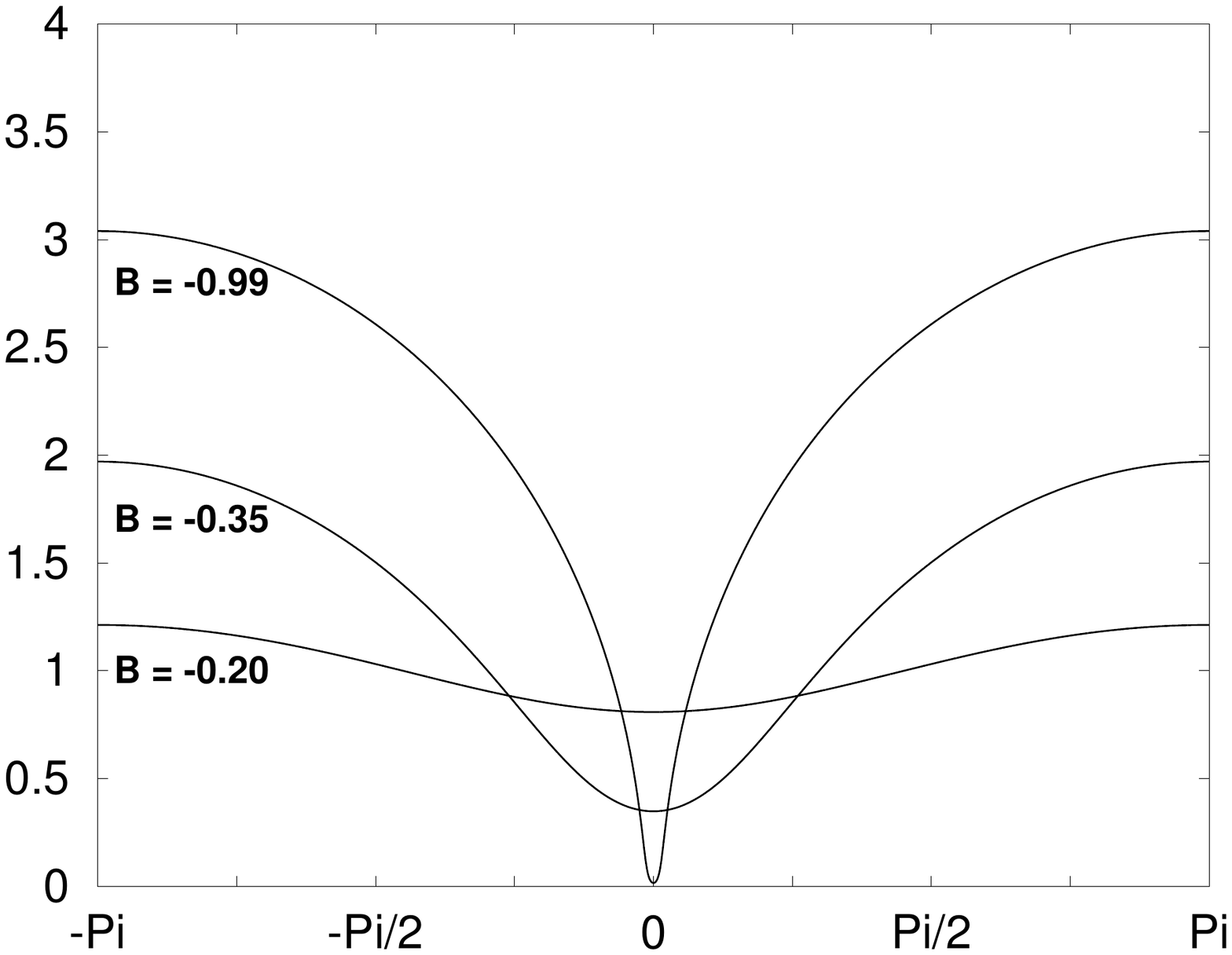}}\quad
\subfigure{\includegraphics[width=2.4in,height=1.5in,clip=true,trim=.5in 2.5in .5in 2.5in]{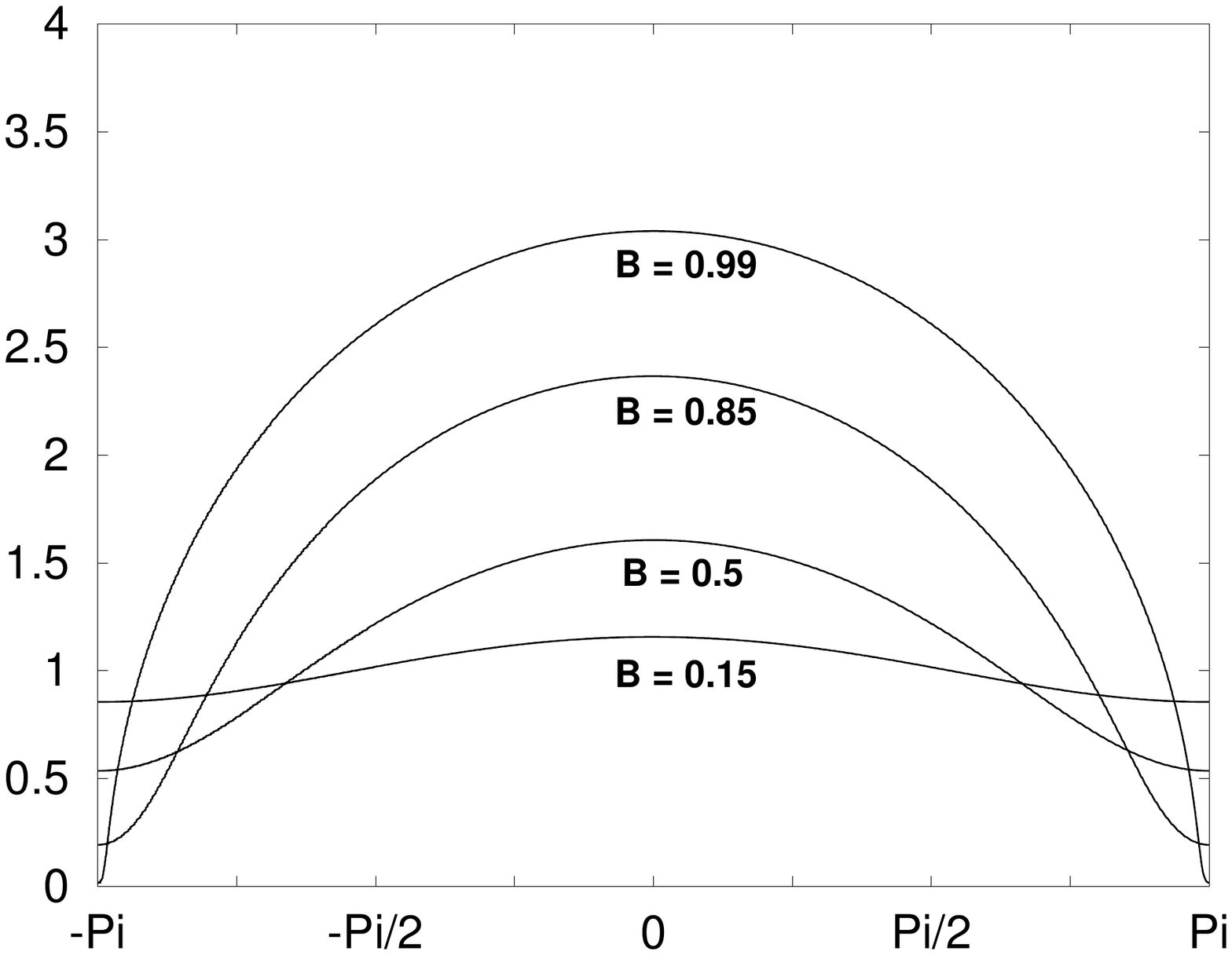} }}
\caption[Undulary Curves]{Families of $2 \pi$ periodic undulary curves (with $k = 1$) with values between $B = -.99$
and $B = 0.99$, as indicated.} \label{Fig:Unduloids}
\end{figure}



\section{Stability Behavior of Cylinders}\label{Sec:Stability}

For the remainder of the paper, we equate the constant function $\rs(x) \equiv \rs > 0$ with the 
cylinder $\Gamma(\rs)$ with radius $\rs$ and length $2 \pi$. The stability of cylinders under the
averaged mean curvature flow is a particularly vibrant question, in rotationally symmetric settings. 
Considering $n$--dimensional smooth surfaces of revolution with Neumann boundary conditions, 
Athanassenas \cite{ATH97} showed that initial surfaces satisfying a particular perimetric--type inequality, 
comparing the surface area, enclosed volume, and length of the interval of revolution,
will have global solutions which eventually converge to an $n$--dimensional cylinder.
More recent results by Hartley \cite{HAR12} indicate that one may be able to generalize this
convergence result to rough initial data without requiring rotational symmetry of initial data,
though we again see a perimetric--type inequality in his results.
Athanassenas notes in \cite{ATH97} that the volume constraint imposed by the 
perimetric--type inequality eliminates the possibility of unduloid equilibria,
in the absence of which one can concludes convergence to cylinders. 
However, in the current paper, we are interested precisely in the 
family of unduloids and how they interact (bifurcate from) the family of cylinders, for 
which a finer analysis is necessary. 

In this section, we will demonstrate that the family of cylinders with radius $\rs > 1$
is exponentially stable, c.f. Theorem~\ref{Thm:Stability}. 
Moreover, we establish instability of cylinders
with radius $0 < \rs < 1$ and develop a setting within which dynamical systems
techniques become accessible. In the next section, we will show how this setting can
be applied to study the bifurcation which occurs at the critical radius $\rs = 1$, and
subsequent bifurcations at smaller radii, c.f. Theorem~\ref{Thm:Bifurcation}.

We first note that functions in the little--H\"older spaces, $r \in h^{\sigma}(\Tone)$,
obey the Fourier series representation (c.f. \cite{LeC11})
\[
r(x) = \sum_{k \in \mathbb{Z}} \hat{r}(k) \; e^{i k x}, \qquad x \in \Tone; \quad \text{where } \; \hat{r}(k) := \frac{1}{2 \pi} \int_{\Tone} r(x) e^{-ikx} \, dx.
\]
We then compute the linearization
\begin{align}\label{Eqn:DG}
DG(\rs) \rho &= - D \cH(\rs) \rho + \frac{1}{2 \pi} \int_{\Tone} [D \cH(\rs) \rho](x) \; dx, \qquad \rho \in h^{2 + \alpha}(\Tone)\\
&= \left( \rho \; \rs^{- 2} + \rho_{xx} \right) - \frac{1}{2 \pi} \int_{\Tone} (\rho(x) \; \rs^{- 2} + \rho_{xx}(x)) \; dx,
\end{align}
and realize it in the form
\[
\big[ DG(\rs) \rho \big] (x) = \sum_{k \in \mathbb{Z} \setminus \{ 0 \}} (\rs^{-2} - k^2) \; \hat{\rho}(k) \, e^{i k x}, \qquad \rho \in h^{2 + \alpha}(\Tone), \quad x \in \Tone.
\]
From this expression, it is easy to verify that the point spectrum of $DG(\rs)$ --
which must in fact coincide with the entire spectrum $\sigma( DG(\rs))$ due to compactness
of the embedding $h^{2 + \alpha}(\Tone) \hookrightarrow h^{\alpha}(\Tone)$ -- is
\begin{equation}\label{Eqn:SpectrumDG}
\sigma(DG(\rs)) = \{ 0 \} \cup \left\{ ( \rs^{-2} - k^2) : k \in \mathbb{Z} \setminus \{ 0 \} \right\}.
\end{equation}
Notice that, for $\rs > 1$, this spectrum is contained in the \emph{left--half} of the complex
plane, which we can take advantage of in order to establish stability of the equilibrium $\rs$. 
However, the presence of $0$ in the spectrum is troublesome, as it indicates the presence
of a center manifold which disturbs, but does not derail, our stability argument. 
To get around this hurdle, we proceed to reinterpret the operator $G,$ 
and subsequently $DG$, in a \emph{reduced} setting where this 
eigenvalue no longer shows up. The following technique takes advantage of the volume--preserving 
nature of \eqref{AMC} in order to investigate local properties of the problem, it is motivated 
by methods of Prokert \cite{PRO97} and Vondenhoff \cite{VON08} and employed by the author in \cite{LS12}.

\subsection{Zero--Mean Functions and the Reduced Problem}

For this subsection, we consider an arbitrary $\rs > 0$ and $\sigma \in \mathbb{R}_+ \setminus \mathbb{Z}$,
unless otherwise stated. We first introduce the mappings
\[
P_0 r := r - \frac{1}{2 \pi}\int_{\Tone} r(x) \; dx, \quad \text{and} \quad Q_0 := 1 - P_0
\]
which define projections on $h^{\sigma}(\Tone)$.
Denote by $h^{\sigma}_0(\Tone)$ the image $P_0 \big( h^{\sigma}(\Tone) \big),$ which 
coincides with the zero-mean functions on $\Tone$ in the regularity class
$h^{\sigma}(\Tone)$. We then have the topological decomposition
\[
h^{\sigma}(\Tone) = h^{\sigma}_0(\Tone) \oplus Q_0 \big( h^{\sigma}(\Tone) \big) 
\cong h^{\sigma}_0(\Tone) \oplus \mathbb{R} \,.
\]
In the sequel, we equate the constant function 
$[\eta(x) \equiv \eta] \in Q_0 \big( h^{\sigma}(\Tone) \big)$
with the value $\eta \in \mathbb{R},$ and we denote each simply as $\eta$. 

Consider the operator
\[
\Phi(r, \tilde{r}, \eta) := \Big( P_0 r - \tilde{r}, \; F(r) - F(\eta) \Big), \qquad \text{with} \quad F(r) := \int_{\Tone} r^2(x) \; dx,
\]
which is a real--analytic operator from 
$h^{\sigma}(\Tone) \times h^{\sigma}_0(\Tone) \times \mathbb{R}$ into $h^{\sigma}_0(\Tone) \times \mathbb{R}$.
Notice that $\Phi(\rs, 0, \rs) = (0, 0)$ and
\begin{equation}\label{Eqn:D1Phi}
\begin{split}
D_1 \Phi (\rs, 0, \rs) &= \Big( P_0 , \; 2 \int_{\Tone} \rs id_{h^{\sigma}(\Tone)} \; dx \Big)\\
&= \Big( P_0, \; 4 \pi \rs Q_0 \Big) \in \mathcal{L}_{isom}(h^{\sigma}(\Tone), h^{\sigma}_0(\Tone) \times \mathbb{R}),
\end{split}
\end{equation} 
i.e. the Fr\'echet derivative of $\Phi$ with respect to the first variable, at $(\rs, 0, \rs)$, is a 
bounded linear isomorphism. Hence, it follows from the implicit function theorem,
c.f. \cite[Theorem 15.3]{DE85}, that there exist neighborhoods 
\begin{equation}\label{Eqn:ImplicitNHoods}
(0, \rs) \in U = U_0 \times U_1 \subset h^{\sigma}_0(\Tone) \times \mathbb{R}, \qquad \rs \in U_{2} \subset h^{\sigma}(\Tone),
\end{equation}
and a $C^{\omega}$ function $\psi_{\star}: U \rightarrow U_{2}$ such that, for all $(r, \tilde{r}, \eta) \in U_{2} \times U$,
\[
\Phi(r, \tilde{r}, \eta) = ( 0, 0 ) \qquad \text{if and only if} \qquad r = \psi_{\star}(\tilde{r}, \eta).
\]

Note that the quantity $F(r)$ corresponds to the volume over $\Tone$ enclosed by the
surface $\Gamma(r)$ (modulo a factor of $2 \pi$), which is a preserved quantity for \eqref{AMC};
i.e. if $r(\cdot, r_0)$ is a solution to \eqref{AMC}, for an appropriate initial value 
$r_0 \in h^{2 \mu + \alpha}(\Tone)$,
it follows that $F(r(t)) = F(r_0)$ for all $t \in J(r_0)$.
Then, we can interpret $\psi_{\star}(\cdot, \eta)$ as a \emph{lifting}
operator which takes a zero--mean function $\tilde{r} \in h^{\sigma}_0(\Tone)$ to an associated 
function in the equivolume set
\[
\mathcal{M}^{\sigma}_{\eta} := \{ r \in h^{\sigma}(\Tone): F(r) = F(\eta) \},
\]
which contains all the profile functions corresponding to surfaces enclosing the same volume as
the cylinder $\eta$. In particular, if we choose $\eta$ so that $F(r_0) = F(\eta)$,
it follows that $r(t, r_0) \in \mathcal{M}^{\alpha}_{\eta}$ for all $t \in J(r_0)$.

\begin{rem}\label{Rem:PsiProperties}
For $\rs > 0$ fixed, we can immediately state the following properties of 
$\psi_{\star}$ which come from the definition and elucidate 
the relationship between $P_0$ and $\psi_{\star}$.
\begin{enumerate}

\item $P_0 \psi_{\star}(\tilde{r}, \eta) = \tilde{r}$ for all $(\tilde{r}, \eta) \in U$.

\item Given $r \in \psi_{\star}(U) \cap \mathcal{M}^{\sigma}_{\eta}$, it follows that $\psi_{\star}(P_0 r, \eta) = r \, .$ 

\item $\psi_{\star} (0, \eta) = \eta$, for $\eta \in U_1$.
This and the preceding remark follow from the fact that $F(\eta)$
is injective as a function on $(0, \infty)$.

\item It follows from the identity $\Phi( \psi_{\star}(\tilde{r}, \eta), \tilde{r}, \eta) = (0,0)$ and differentiating with
respect to $\tilde{r}$ that 
\[
D_1 \psi_{\star}(0,\eta)h = h, \qquad h \in h^{\sigma}_0(\Tone), \; \eta \in U_1.
\]

\item Note $\psi_{\star}(U_0, \eta) \subset \mathcal{M}^{\sigma}_{\eta}$ for $\eta \in U_1$
and we have the representation
\[
\psi_{\star}(\tilde{r}, \eta) = \Big(P_0 + Q_0 \Big) \psi_{\star}(\tilde{r}, \eta) = \tilde{r} + \frac{1}{2 \pi} \int_{\Tone} \psi_{\star}(\tilde{r}, \eta)(x) dx, \qquad \tr \in U_0.
\]
Hence, we see that $\mathcal{M}^{\sigma}_{\eta} \cap U_2$ is the graph of a $\mathbb{R}$--valued function
over $h^{\sigma}_0(\Tone)$ which is therefore a Banach manifold, with real--analytic 
local parametrization $\psi_{\star}(\cdot, \eta)$.

\item A priori, $\psi_{\star}(\cdot, \eta)$ depends upon the parameter $\sigma$, however it follows from the 
preceding representation that
\[
\psi_{\star}(\cdot, \eta): U_0 \cap h^{\tilde{\sigma}}_0(\Tone) \rightarrow h^{\tilde{\sigma}}(\Tone), \quad \tilde{\sigma} \in \mathbb{R}_+ \setminus \mathbb{Z},
\]
so that $\psi_{\star}$ preserves the spacial regularity of functions regardless of the regularity parameter $\sigma$
with which $\psi_{\star}$ was constructed. However, notice that the neighborhood $U_0$ will remain intrinsically linked
with the parameter which was used to construct $\psi_{\star}$.

\end{enumerate}
\end{rem}

Fix $\rs > 0$ and define the \emph{reduced} governing operator
\begin{equation}\label{Eqn:mGDefined}
\mathcal{G}(\tr, \eta) := P_0 G(\psi_{\star}(\tr, \eta))
\end{equation}
which acts on $(\tr, \eta)$ in a neighborhood of $U$, as constructed for 
$\psi_{\star}$ above.
We need to be careful with the regularity of the zero--mean functions $\tr$ that we
plug into $\mathcal{G}$, so we will assume throughout that $\alpha \in (0,1)$ is 
given, and then we define the lifting $\psi_{\star}$ and the related neighborhood
$U = U_0 \times U_1$ within the setting of $h^{\alpha}_0(\Tone).$ 
Then, we take $\mathcal{G}(\cdot, \eta)$ as acting
on the functions $\tr \in U_0 \cap h^{2 + \alpha}_0(\Tone)$, $\eta \in U_1$.
With this reduced operator, we defined the $\eta$--dependent reduced problem 
\begin{equation}\label{ReducedAMC}
\begin{cases} \tr_{t}(t,x) = \mathcal{G}(\tr(t,x), \eta), &\text{$t > 0, \, x \in \Tone,$}\\
\tr(0,x) = (P_0 r_0)(x), &\text{$x \in \Tone$.} \end{cases}
\end{equation}
Theoretically, we can choose the parameter $\eta \in U_1$ arbitrarily, 
however, in practice, we will choose $\eta$ for which $F(\eta) = F(r_0)$,
which essentially gives us the freedom to consider non--volume preserving
perturbations $r_0$ of the cylinder $\rs$ in our stability analysis.

Fix $\alpha \in (0,1)$ and we denote the spaces 
\[
F_0 := h^{\alpha}_0(\Tone), \quad F_1 := h^{2 + \alpha}_0(\Tone), \quad \text{and} \quad
F_{\mu} := (F_0, F_1)_{\mu, \infty}^0, \quad \mu \in (0,1),
\]
so that $F_{\mu} = P_0 E_{\mu}$ for $\mu \in [0,1]$. Moreover, for
$\mu \in (0, 1]$ and closed intervals $J \subseteq \mathbb{R}_+$ with $0 \in J$, define the spaces
\begin{align*}
\mathbb{E}_0(J) &:= BU\!C_{1 - \mu}(J, E_0),\\
\mathbb{E}_1(J) &:= BU\!C_{1 - \mu}^1(J, E_0) \cap BU\!C_{1 - \mu}(J, E_1),
\end{align*}
and 
\begin{align*}
\mathbb{F}_{0}(J) &:= BU\!C_{1 - \mu}(J, F_0),\\
\mathbb{F}_{1}(J) &:= BU\!C_{1 - \mu}^1(J, F_0) \cap BU\!C_{1 - \mu}(J, F_1),
\end{align*}
within which we will discuss solutions to \eqref{AMC} and 
the reduced problem \eqref{ReducedAMC}, respectively. We can immediately
see how the lifting operator $\psi_{\star}$ connects these two problems.

\begin{lemma}\label{Lem:PsiLifts}
Fix $\rs > 0$, $\eta \in U_1$ and $J := [0, T]$ for $T > 0$. Then
\begin{equation}\label{Eqn:PsiMaps}
\psi_{\star}(\cdot, \eta) : \mathbb{F}_1(J) \cap C(J, U_0) \longrightarrow \mathbb{E}_1(J), \qquad \text{with} \quad \psi_{\star}(\tr, \eta)(t) := \psi_{\star}(\tr(t), \eta).
\end{equation}
Moreover, if $\tr_0 := P_0 r_0 \in F_{\mu}$ and $\tr = \tr(\cdot, \tr_0) \in \mathbb{F}_1(J) \cap C(J, U_0)$ 
is a solution to \eqref{ReducedAMC}, for some $\mu \in [1/2, 1]$, then $r := \psi_{\star}(\tr, \eta)$ 
is the unique solution on the interval $J$ to \eqref{AMC}, with initial data 
$r_0 := \psi_{\star}(\tr_0, \eta) \in E_{\mu}$. 
\end{lemma}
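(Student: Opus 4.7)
The plan is to prove this lemma in two stages: first verify the mapping property \eqref{Eqn:PsiMaps}, then establish the solution correspondence via the chain rule in time.

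For the mapping property, I would exploit Remark~\ref{Rem:PsiProperties}(e), which gives the explicit decomposition
\[
\psi_{\star}(\tr, \eta) = \tr + c(\tr, \eta), \qquad c(\tr, \eta) := \tfrac{1}{2\pi}\int_{\Tone} \psi_{\star}(\tr, \eta)(x)\, dx,
\]
where $c(\cdot, \eta): U_0 \to \mathbb{R}$ is real-analytic. Since $F_j$ is a closed subspace of $E_j$, the inclusion $\mathbb{F}_1(J) \hookrightarrow \mathbb{E}_1(J)$ handles the zero-mean part. The scalar $c(\tr(\cdot), \eta)$ inherits the necessary time regularity from $\tr \in \mathbb{F}_1(J) \cap C(J, U_0)$: it is bounded because $\tr(J)$ is relatively compact in $U_0$, and the chain rule yields $\tfrac{d}{dt} c(\tr(t), \eta) = D_1 c(\tr(t), \eta)\,\dot{\tr}(t) \in BU\!C_{1 - \mu}(J, \mathbb{R})$, since the operator norm of $D_1 c(\tr(t), \eta): F_0 \to \mathbb{R}$ is uniformly bounded in $t$.

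For the solution correspondence, I would set $r(t) := \psi_{\star}(\tr(t), \eta)$. Differentiating in time using the real-analyticity of $\psi_{\star}$ gives
\[
r_t(t) = D_1\psi_{\star}(\tr(t), \eta)\, \dot{\tr}(t) = D_1\psi_{\star}(\tr(t), \eta)\, \mathcal{G}(\tr(t), \eta) = D_1\psi_{\star}(\tr(t), \eta)\, P_0 G(r(t)),
\]
so the crux reduces to the identity $D_1\psi_{\star}(\tr, \eta)\, P_0 G(r) = G(r)$ for $r = \psi_{\star}(\tr, \eta)$. To establish this, I would differentiate the defining relation $\Phi(\psi_{\star}(\tr, \eta), \tr, \eta) = (0, 0)$ with respect to $\tr$, producing two constraints on $D_1\psi_{\star}(\tr, \eta)$: namely (i) $P_0 \circ D_1\psi_{\star}(\tr, \eta) = \mathrm{id}_{F_0}$, and (ii) $\int_{\Tone} r(x)\, [D_1\psi_{\star}(\tr, \eta) w](x)\, dx = 0$ for every $w \in F_0$. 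By (i), $D_1\psi_{\star}(\tr, \eta)\, P_0 G(r) = P_0 G(r) + q$ for some scalar $q \in \mathbb{R}$. Substituting into (ii) with $w = P_0 G(r)$ and invoking the critical cancellation
\[
\int_{\Tone} r(x) G(r)(x)\, dx = h(r) S(r) - \int_{\Tone} \cH(r)(x)\, r(x)\sqrt{1 + r_x^2(x)}\, dx = 0,
\]
which is immediate from the definition of $h(r)$ in \eqref{Eqn:h}, one solves $q = Q_0 G(r)$; hence $D_1\psi_{\star}(\tr, \eta)\, P_0 G(r) = P_0 G(r) + Q_0 G(r) = G(r)$.

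The initial condition $r(0) = \psi_{\star}(\tr_0, \eta) = r_0$ is immediate, and uniqueness of $r$ as a solution in $\mathbb{E}_1(J)$ follows from Theorem~\ref{Thm:WellPosedness}. The main obstacle is precisely the identity $D_1\psi_{\star}(\tr, \eta)\, P_0 G(r) = G(r)$: it is here that the IFT-level structure of $\psi_{\star}$ (encoded in the orthogonality constraint (ii)) couples to the geometric flow via the volume-preservation identity $\int_{\Tone} r\, G(r)\, dx = 0$. Everything else is routine chain-rule bookkeeping.
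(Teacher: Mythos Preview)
Your argument is correct. The mapping property is handled essentially as in the paper, via the decomposition in Remark~\ref{Rem:PsiProperties}(e) together with uniform bounds on $\psi_\star$ and $D_1\psi_\star$.

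Where you diverge from the paper is in the derivation of the key identity
\[
D_1\psi_\star(\tr,\eta)\, P_0 G(r) = G(r), \qquad r = \psi_\star(\tr,\eta).
\]
You obtain it algebraically: differentiating the defining relation $\Phi(\psi_\star(\tr,\eta),\tr,\eta)=(0,0)$ gives $P_0\circ D_1\psi_\star(\tr,\eta)=\mathrm{id}_{F_0}$ and the orthogonality constraint $\int_{\Tone} r\,[D_1\psi_\star(\tr,\eta)w]\,dx=0$, and then the volume--preservation identity $\int_{\Tone} r\,G(r)\,dx=0$ pins down the constant. The paper instead fixes $t$, introduces the auxiliary Cauchy problem $\dot\gamma(\tau)=G(\gamma(\tau))$, $\gamma(0)=\psi_\star(\tr(t),\eta)$, invokes well--posedness (Theorem~\ref{Thm:WellPosedness}) and volume preservation of the actual flow to write $\gamma(\tau)=\psi_\star(P_0\gamma(\tau),\eta)$, and then differentiates at $\tau=0$. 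Your route is more elementary---it avoids calling on the auxiliary well--posedness and works purely at the level of the implicit function theorem and the static identity $\int r\,G(r)=0$. The paper's route is more geometric, in that it exhibits $G(r)$ as literally tangent to $\mathcal{M}^\sigma_\eta$ by flowing along it; this viewpoint is what the paper later exploits in \eqref{Eqn:GLinear1}--\eqref{Eqn:GLinear2}. Both arguments ultimately rest on the same volume--preservation structure, just accessed at different levels.
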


\begin{proof}
Because of the local nature of the operator $\psi_{\star}$, we can assume, without 
loss of generality, that the neighborhood $U = U_0 \times U_1$ is chosen sufficiently
small to ensure that $\psi_{\star}$ is in the regularity class $C^{\omega}$ and the bounds
\begin{equation}\label{Eqn:PsiBound}
\| \psi(\tr, \eta) \|_{E_0} \leq N \quad \text{and} \quad \| D_1 \psi(\tr, \eta) \|_{\mathcal{L}(F_0, E_0)} \leq N 
\end{equation}
hold uniformly for $(\tr, \eta) \in U.$
Further, the embeddings
\begin{equation}\label{Eqn:Embedding}
\mathbb{F}_1(J) \hookrightarrow BU\!C(J, F_{\mu}) \hookrightarrow BU\!C(J, F_0), \qquad \mu \in [1/2, 1],
\end{equation}
follow from \cite[Theorem III.2.3.3]{AM95} and the continuous embedding of little-H\"older 
spaces, respectively. It is then straightforward, utilizing the representation for $\psi_{\star}$ 
given in Remarks~\ref{Rem:PsiProperties}(e) and these bounds, to verify \eqref{Eqn:PsiMaps}. 

Meanwhile, by \eqref{Eqn:Embedding} we have
$r_0 := \psi_{\star}(\tr_0, \eta) \in V_{\mu}$, and so it follows from
Theorem~\ref{Thm:WellPosedness} that there exists a unique maximal solution
\[
r(\cdot, r_0) \in C_{1 - \mu}^1(J(r_0), E_0) \cap C_{1 - \mu}(J(r_0), E_1)
\]
to \eqref{AMC}. Define $\rho(\cdot) := \psi(\tr(\cdot),\eta)$ as indicated 
and it suffices to show that $\rho_t(t) = G(\rho(t))$ for $t \in \dot{J} := (0, T]$, 
since this will imply that $\rho(t) = r(t, r_0)$ by uniqueness 
and maximality of the solution $r(\cdot, r_0)$. So,
let $t \in \dot{J}$ and consider the auxiliary problem  
\[
\begin{cases} \dot{\gamma}(\tau) = G(\gamma(\tau)), &\text{for $\tau \in [0, \varepsilon],$}\\
\gamma(0) = \rho(t), \end{cases}
\] 
which has a unique solution $\gamma \in C^1([0, \varepsilon], E_0) \cap C([0, \varepsilon], E_1)$
by Theorem~\ref{Thm:WellPosedness}, provided we choose $\varepsilon > 0$ sufficiently small
for the particular value $\rho(t) \in E_1$.  
By Remarks~\ref{Rem:PsiProperties} and preservation of volume for solutions of \eqref{AMC},
we have the representation $\gamma(\tau) = \psi_{\star}( P_0 \gamma(\tau), \eta)$,
$\tau \in [0, \varepsilon]$. We then conclude the proof by computing
\begin{align}\label{Eqn:GTangent}
G(\rho(t)) &= \dot{\gamma}(0) = \partial_{\tau} \left( \psi_{\star}(P_0 \gamma(\tau), \eta) \right) \Big|_{\tau = 0} 
= D_1 \psi_{\star}(P_0 \gamma(0), \eta) P_0 \dot{\gamma}(0) \nonumber\\
&= D_1 \psi_{\star}(P_0 \rho(t), \eta) P_0 G( \rho(t)) = D_1 \psi_{\star}( \tr(t), \eta) \mathcal{G}(\tr(t), \eta)\\
&= \partial_t \left( \psi_{\star}( \tr(t), \eta) \right) = \rho_t(t). \nonumber \qedhere
\end{align}
\end{proof}

Considering the stability of $\rs$, we compute the linearization
\[
D_1 \mathcal{G}(\tr, \eta) = P_0 DG(\psi_{\star}(\tr, \eta)) D_1 \psi_{\star}(\tr, \eta), \qquad (\tr, \eta) \in U,
\]
which simplifies at $\tr = 0$, using \eqref{Eqn:DG} and Remarks~\ref{Rem:PsiProperties}(d),
\begin{equation}\label{Eqn:DmGat0}
D_1 \mathcal{G}(0, \eta) \trho = P_0 \Big( DG(\eta) \Big) D_1 \psi(0, \eta) \trho = - P_0 \Big( P_0 D\cH(\eta) \Big) \trho = - P_0 D \cH(\eta) \trho
\end{equation}
for $\trho \in h^{2 + \alpha}_0(\Tone)$.
Recalling \eqref{Eqn:SpectrumDG}, and the reduced domain of definition
for $\mathcal{G}$, we conclude
\begin{equation}\label{Eqn:DmGSpectrum}
\sigma(D_1 \mathcal{G}(0, \eta)) = \left\{ (\eta^{-2} - k^2) : k \in \mathbb{Z} \setminus \{ 0 \} \right\}. 
\end{equation}

\subsection{Stability of Cylinders with $\rs > 1$}

Define the exponentially weighted maximal regularity spaces
\[
\mathbb{F}_j(\mathbb{R}_+, \omega) := \Big\{ f : (0, \infty) \rightarrow F_0 
\; \Big| \; [t \mapsto e^{\omega t} f(t)] \in \mathbb{F}_j(\mathbb{R}_+) \Big\}, \qquad \omega \in \mathbb{R}, \, j = 0,1,
\]
which are Banach spaces when equipped with norms
$\| u \|_{\mathbb{F}_j(\mathbb{R}_+, \omega)} := \| e^{\omega t} u \|_{\mathbb{F}_j(\mathbb{R}_+)}.$
We use these spaces in order to show the following stability result, 
which essentially tells us that $h^{2 \mu + \alpha}$--small perturbations of the cylinder $\rs > 1$ 
will have global solutions which converge exponentially fast to a cylinder $\eta$, which
is close to $\rs$.

\begin{thm}\label{Thm:Stability}
Fix $\alpha \in (0,1), \; \mu \in [1/2, 1]$ so that $2 \mu + \alpha \notin \mathbb{Z}$, and let $\rs > 1$. 
There exist positive constants $\varepsilon = \varepsilon(\rs), \; \delta = \delta(\rs)$ and 
$\omega = \omega(\rs, \delta),$ such that problem \eqref{AMC} with initial data 
$r_0 \in \mathbb{B}_{h^{2 \mu + \alpha}}(\rs, \varepsilon)$ has a unique global solution
\[
r(\cdot, r_0) \in C^1_{1 - \mu} (\mathbb{R}_+, h^{\alpha}(\Tone)) \cap C_{1 - \mu}(\mathbb{R}_+, h^{2 + \alpha}(\Tone)),
\]
and there exist $\eta = \eta(r_0) \in (\rs - \delta, \rs + \delta)$ and $M = M(\alpha) > 0$ for which the bound
\[
t^{1 - \mu} \| r(t, r_0) - \eta \|_{h^{2 + \alpha}} + \| r(t, r_0) - \eta \|_{h^{2 \mu + \alpha}} \le M e^{-\omega t} \| r_0 - \rs \|_{h^{2 \mu + \alpha}}
\]
holds uniformly for $t \ge 0$.
\end{thm}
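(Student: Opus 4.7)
The plan is to prove stability by reducing to the zero-mean problem \eqref{ReducedAMC}, where the troublesome zero eigenvalue has been excised, and then running a standard weighted maximal regularity fixed point argument. First I would use volume preservation to pick the correct asymptotic cylinder: since $F(\eta) = 2\pi \eta^2$ is strictly monotone on $(0, \infty)$, for $r_0$ sufficiently close to $\rs$ in $h^{2\mu + \alpha}$ there is a unique $\eta = \eta(r_0) \in (\rs - \delta, \rs + \delta)$ with $F(\eta) = F(r_0)$, and this map $r_0 \mapsto \eta$ is real-analytic and Lipschitz in the $h^{2\mu + \alpha}$-topology. Setting $\tr_0 := P_0 r_0 \in F_\mu$, one observes $r_0 = \psi_\star(\tr_0, \eta)$ (by the characterization in Remark~\ref{Rem:PsiProperties}(b)), and $\| \tr_0 \|_{F_\mu}$ is controlled by $\| r_0 - \rs \|_{h^{2\mu+\alpha}}$.

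The linear heart of the argument is the spectral bound on the reduced generator. By \eqref{Eqn:DmGSpectrum}, for $\eta$ close to $\rs > 1$ the spectrum of $D_1 \mathcal{G}(0, \eta)$ is contained in $\{\mathrm{Re}\, z \le -\omega_0\}$ with $\omega_0 := 1 - \eta^{-2} > 0$ (since $|k| \ge 1$ for $k \ne 0$, and the largest eigenvalue $\eta^{-2} - 1$ is strictly negative). By the same perturbation argument used in Lemma~\ref{Lem:AandFRegularity}, $D_1 \mathcal{G}(0, \eta) \in \mathcal{MR}_\mu(F_1, F_0)$. Shifting by $\omega \in (0, \omega_0)$, the operator $D_1 \mathcal{G}(0, \eta) - \omega$ remains sectorial with spectrum in the open left half-plane, and standard results (cf.\ \cite{AM95, CS01}) promote this to continuous maximal regularity on the exponentially weighted spaces $\mathbb{F}_j(\mathbb{R}_+, \omega)$; equivalently, the unweighted problem $\dot v + (D_1 \mathcal{G}(0, \eta) - \omega) v = g$ with $v(0) = v_0$ is an isomorphism from $\mathbb{F}_1(\mathbb{R}_+)$ to $\mathbb{F}_0(\mathbb{R}_+) \times F_\mu$.

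Next I would set up the nonlinear fixed point. Write the reduced equation \eqref{ReducedAMC} (with fixed $\eta$) in the form $\tr_t - D_1 \mathcal{G}(0, \eta) \tr = H(\tr, \eta) := \mathcal{G}(\tr, \eta) - D_1 \mathcal{G}(0, \eta) \tr,$ where $H$ is real-analytic with $H(0, \eta) = 0$ and $D_1 H(0, \eta) = 0$. Defining the substitution $v(t) := e^{\omega t} \tr(t)$, the problem becomes
\begin{equation*}
\dot v + (-D_1 \mathcal{G}(0, \eta) - \omega) v = e^{\omega t} H(e^{-\omega t} v, \eta), \qquad v(0) = \tr_0,
\end{equation*}
which I would solve in a small ball $\mathbb{B}_\varepsilon \subset \mathbb{F}_1(\mathbb{R}_+)$ by a contraction mapping argument: the left hand side inverts as an isomorphism onto $\mathbb{F}_0(\mathbb{R}_+) \times F_\mu$ with uniform constants (the key point, since $-\omega$ has been absorbed into the shifted spectrum), while the nonlinear right-hand side is a small contraction thanks to $D_1 H(0, \eta) = 0$ and the continuity of Nemytskii operators induced by the embedding $\mathbb{F}_1(\mathbb{R}_+) \hookrightarrow BU\!C(\mathbb{R}_+, F_\mu)$. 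This yields a unique global solution $\tr(\cdot, \tr_0) \in \mathbb{F}_1(\mathbb{R}_+, \omega)$ with norm bounded by a constant times $\| \tr_0 \|_{F_\mu}$, giving the decay estimate
\begin{equation*}
t^{1 - \mu} \| \tr(t) \|_{F_1} + \| \tr(t) \|_{F_\mu} \le M e^{-\omega t} \| \tr_0 \|_{F_\mu}.
\end{equation*}

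Finally I would lift back via $\psi_\star$. By Lemma~\ref{Lem:PsiLifts}, $r := \psi_\star(\tr, \eta)$ is the unique solution to \eqref{AMC} with initial datum $r_0$. Since $\psi_\star(0, \eta) = \eta$ (Remark~\ref{Rem:PsiProperties}(c)) and $\psi_\star(\cdot, \eta)$ is real-analytic, a mean-value estimate yields $\| r(t) - \eta \|_{h^{2 + \alpha}} \lesssim \| \tr(t) \|_{F_1}$ and similarly in $h^{2\mu + \alpha}$, which combined with the weighted decay of $\tr$ gives the desired estimate after adjusting $M$. The main obstacle is technical rather than conceptual: verifying that $H$ is a well-defined contraction on the exponentially weighted spaces with constant independent of $\eta$ near $\rs$, so that $\varepsilon, \delta, \omega$ can be chosen uniformly; this requires the substitution-operator estimates for the analytic nonlinearity to respect both the time-weight $t^{1-\mu}$ and the exponential weight $e^{\omega t}$, which follows from the algebraic structure of $H$ but must be checked carefully.
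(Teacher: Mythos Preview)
Your approach is correct and follows the same overall strategy as the paper: reduce to the zero-mean problem via $\psi_\star$, exploit the strictly negative spectrum \eqref{Eqn:DmGSpectrum} of $D_1\mathcal{G}(0,\eta)$ for $\eta$ near $\rs>1$ to obtain continuous maximal regularity on the exponentially weighted spaces $\mathbb{F}_j(\mathbb{R}_+,\omega)$, solve the reduced nonlinear problem globally in these spaces, and then lift back via Lemma~\ref{Lem:PsiLifts} together with a mean-value estimate on $\psi_\star(\cdot,\eta)$ about $\psi_\star(0,\eta)=\eta$.

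The one genuine difference is how the nonlinear reduced problem is handled. You set up an explicit contraction mapping for the remainder $H(\tr,\eta) = \mathcal{G}(\tr,\eta) - D_1\mathcal{G}(0,\eta)\tr$ after the substitution $v=e^{\omega t}\tr$, whereas the paper packages the same step into a single application of the implicit function theorem to the map $\mathcal{K}(\tr,\tr_0,\eta) = (\partial_t \tr - \mathcal{G}(\tr,\eta),\, \gamma\tr - \tr_0)$ on $\mathbb{F}_1(\mathbb{R}_+,\omega) \times F_\mu \times U_1$, using that $D_1\mathcal{K}(0,0,\eta)$ is an isomorphism by weighted maximal regularity. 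The IFT route is cleaner: it bypasses the explicit verification that the Nemytskii operator induced by $H$ respects both the $t^{1-\mu}$ and $e^{\omega t}$ weights (the technical point you flag), and it immediately delivers a real-analytic solution map $\kappa(\tr_0,\eta)$ whose derivative bound gives $\|\kappa(P_0 r_0,\eta)\|_{\mathbb{F}_1(\mathbb{R}_+,\omega)} \le M\|r_0-\rs\|_{E_\mu}$ directly. Your contraction argument would recover the same Lipschitz bound from the fixed-point iteration, at the cost of the careful substitution-operator estimates you acknowledge. Both arguments work and are essentially equivalent; the paper's choice is consistent with its systematic reliance on IFT elsewhere (well-posedness, bifurcation). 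One small slip: the shifted operator whose spectrum must remain in the open left half-plane is $D_1\mathcal{G}(0,\eta)+\omega$, not $D_1\mathcal{G}(0,\eta)-\omega$, though your transformed equation for $v$ has the correct sign.
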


\begin{proof}
We demonstrate this result by first showing that $0$ is exponentially stable in 
the reduced problem \eqref{ReducedAMC}, 
and then we lift solutions using $\psi_{\star}$ and show that 
exponential convergence is preserved in the lifting process.

{\bf (i)} Fix $\delta \in (0, \rs - 1)$. Notice that the linearization of $\mathcal{G}(0,\eta)$
has the structure
\[
D_1 \mathcal{G}(0, \eta) \trho = P_0 \Big( h_{xx} + \eta^{-2} \, \trho \Big) =  \Big( h_{xx} + \eta^{-2} \, \trho \Big) - \frac{1}{2 \pi} \int_{\Tone} \Big( \trho_{xx} (x) + \eta^{-2} \, \trho(x) \Big) \, dx, 
\]
from which, similar to the argument presented in Theorem~\ref{Thm:WellPosedness}, 
we realize $D_1 \mathcal{G}(0, \eta)$ as a lower order perturbation
of the second order derivative operator $\partial_x^2$. Hence, it follows that
$D_1 \mathcal{G} (0, \eta) \in \mathcal{H}(E_1, E_0)$; the class of infinitesimal generators of 
analytic semigroups on $E_0$ with domain $E_1$. Moreover, it follows from \eqref{Eqn:DmGSpectrum}
that the spectral type 
\[
type(D_1 \mathcal{G}(0, \eta)) < \frac{1 - (\rs - \delta)^{2}}{(\rs - \delta)^2} < 0, \qquad \text{for all} \quad \eta \in (\rs - \delta, \rs + \delta).
\]
Hence, if we choose $\omega \in \left( 0, \frac{(\rs - \delta)^2 - 1}{(\rs - \delta)^2} \right)$, 
then it follows by \cite[Theorem III.3.4.1 and Remarks 3.4.2(b)]{AM95} that the exponentially weighted spaces ,
\[
\Big( \mathbb{F}_0(\mathbb{R}_+, \omega), \; \mathbb{F}_1(\mathbb{R}_+, \omega) \Big),
\]
are a pair of maximal regularity for $D_1 \mathcal{G}(0, \eta)$, for all $\eta \in (\rs - \delta, \rs + \delta)$.

Define the operator
\[
\mathcal{K}(\tr, \tr_0, \eta) := \Big( \partial_t \tr - \mathcal{G}(\tr, \eta), \; \gamma \tr - \tr_0  \Big),
\]
acting on $\mathbb{U} := \Big( \mathbb{F}_1(\mathbb{R}_+, \omega) \cap C(\mathbb{R}_+, U_0) \Big) \times \Big(U_0 \cap F_{\mu} \Big) \times U_1 \, ,$
which is an open subset of $\mathbb{F}_1(\mathbb{R}_+, \omega) \times F_{\mu} \times \mathbb{R}$. 
It follows that
\[
\mathcal{K} \in C^{\omega} \Big( \mathbb{U}, \mathbb{F}_0(\mathbb{R}_+, \omega) \times F_{\mu} \Big),
\]
by analyzing the mapping properties of the individual operators $\gamma, \, \partial_t$ and $\mathcal{G}$
on their associated domains of definition. The fact that $\mathcal{G}(\cdot, \eta)$ maps 
$\Big( \mathbb{F}_1(\mathbb{R}_+, \omega) \cap C(\mathbb{R}_+, U_0) \Big)$ into 
$\mathbb{F}_0(\mathbb{R}_+, \omega)$ follows by utilizing the representation \eqref{Eqn:QuasiStructure}, 
and bounding individual terms of the resulting expression using the embeddings \eqref{Eqn:Embedding} and 
the exponential boundedness of functions in $\mathbb{F}_1(\mathbb{R}_+, \omega)$.

Meanwhile, notice that $\mathcal{K}(0,0,\eta) = (0, 0)$ and
\[ 
D_1 \mathcal{K}(0,0,\eta) = \Big( \partial_t - D_1 \mathcal{G}(0,\eta), \gamma  \Big) \in \mathcal{L}_{isom}
\Big(\mathbb{F}_1(\mathbb{R}_+, \omega), \mathbb{F}_0(\mathbb{R}_+, \omega) 
\times F_{\mu} \Big),
\]
for $\eta \in (\rs - \delta, \rs + \delta)$. Thus, by the implicit function 
theorem there exists an open neighborhood 
$(0, \rs) \in \tilde{U} \subset F_{\mu} \times \mathbb{R}$ and a $C^{\omega}$ mapping 
$\kappa : \tilde{U} \rightarrow \mathbb{F}_1(\mathbb{R}_+, \omega)$ such that 
\[
\mathcal{K}(\kappa(\tr_0, \eta), \tr_0, \eta) = (0, 0) \quad \text{for all} \quad (\tr_0, \eta) \in \tilde{U}.
\]
In particular, $\kappa(\tr_0, \eta)$ is a global solution to \eqref{ReducedAMC} with parameter $\eta$ and 
initial data $\tr_0 \in F_{\mu}$, which converges exponentially fast to 0. 
Without loss of generality, we assume $\tilde{U} \subset U_1 \times (\rs - \delta, \rs + \delta)$.

{\bf (ii)} Choose $\varepsilon > 0$ so that, for every $r_0 \in \mathbb{B}_{E_{\mu}}(\rs, \varepsilon)$,
there exists $(\tr, \eta) \in \tilde{U}$ for which $\Phi(r_0, \tr, \eta) = (0,0)$. 
Let $r_0 \in \mathbb{B}_{E_{\mu}}(\rs, \varepsilon)$, fix $\eta \in (\rs - \delta, \rs + \delta)$
so that $F(r_0) = F(\eta)$ and define
\[
r(\cdot) := \psi_{\star}(\kappa(P_0 r_0, \eta), \eta).
\]
Using Lemma~\ref{Lem:PsiLifts}, one verifies that 
$r \in C^1_{1 - \mu} (\mathbb{R}_+, h^{\alpha}(\Tone)) \cap C_{1 - \mu}(\mathbb{R}_+, h^{2 + \alpha}(\Tone))$ 
is the unique global solution to \eqref{AMC} with initial data $r_0$. 
Hence, it remains to show exponential convergence of $r$ to the cylinder $\eta$.
Noting that $\kappa(0, \eta) \equiv 0$, and utilizing Remarks~\ref{Rem:PsiProperties} and 
the mean value theorem, we compute
\begin{align*}
r(t) - & \eta = (P_0 + Q_0) \Big( \psi_{\star}(\kappa(P_0 r_0, \eta)(t), \eta) - \psi_{\star}(\kappa(0, \eta)(t), \eta) \Big)\\
&= \kappa(P_0 r_0, \eta)(t) + Q_0 \Big( \psi_{\star}(\kappa(P_0 r_0, \eta)(t), \eta) - \psi_{\star}(\kappa(0, \eta)(t), \eta) \Big)\\
&= \kappa(P_0 r_0, \eta)(t) + \frac{1}{2 \pi} \int_{\Tone} \int_0^1 D_1 \psi_{\star} \big( \tau \kappa(P_0 r_0, \eta)(t), \eta \big) \kappa(P_0 r_0, \eta)(t, x) \; d\tau \, dx,
\end{align*}
for $t > 0$. We bound the terms
\[
e^{\omega t} t^{1 - \mu} \| \kappa(P_0 r_0, \eta)(t) \|_{F_0} \qquad \text{and} \qquad e^{\omega t} \| \kappa(P_0 r_0, \eta)(t) \|_{F_{\mu}}
\]
uniformly for $t \ge 0$ using the property $\kappa(P_0 r_0, \eta) \in \mathbb{F}_1(\mathbb{R}_+, \omega)$ 
and \eqref{Eqn:Embedding}, respectively. Meanwhile, bounding integral terms in the
$C(\Tone)$--topology and using the bounds \eqref{Eqn:PsiBound}, we get
\begin{equation}\label{Eqn:Bound2}
e^{\omega t} t^{1 - \mu} \| r(t) - \eta \|_{E_1} \leq \Big(1 + c_1 N \Big) \| \kappa(P_0 r_0, \eta) \|_{\mathbb{F}_1(\mathbb{R}_+, \omega)}, \qquad t \ge 0,
\end{equation} 
\begin{equation}\label{Eqn:Bound3}
e^{\omega t} \| r(t) - \eta \|_{E_{\mu}} \leq \Big( c_2 + c_3 N \Big) \| \kappa(P_0r_0, \eta) \|_{\mathbb{F}_1(\mathbb{R}_+, \omega)}, \qquad t \ge 0.
\end{equation}
Here the constant $c_1$ comes from the embedding 
$F_1 \hookrightarrow F_0$, and the constants $c_2$ and $c_3$ come from 
the embeddings \eqref{Eqn:Embedding}. Finally, by the regularity of 
$\kappa$, we may assume that $\tilde{U}$ was chosen sufficiently small to ensure that $D_1 \kappa$ is uniformly
bounded from $\tilde{U}$ into $\mathbb{F}_1(\mathbb{R}_+, \omega).$
Recalling that $\kappa(0, \eta) = 0$, we have
\begin{equation}\label{Eqn:Kappa}
\begin{split}
\| \kappa(P_0 r_0, \eta) \|_{\mathbb{F}_1(\mathbb{R}_+, \omega)} &\leq \int_0^1 \left\| D_1 \kappa(\tau P_0 r_0, \eta) P_0 r_0 \right\|_{\mathbb{F}_1(\mathbb{R}_+, \omega)} \; d \tau\\
&\leq \tilde{M} \| P_0 r_0 \|_{F_{\mu}} \leq M \|r_0 - \rs \|_{E_{\mu}},
\end{split}
\end{equation}
where $M := \| P_0 \| \sup_{(\tr, \eta) \in \tilde{U}} \|D_1 \kappa(\tr, \eta) \|_{\mathcal{L}(
F_{\mu}, \mathbb{F}_1(\mathbb{R}_+, \omega))}.$ 
The claim now follows from \eqref{Eqn:Kappa} and
the inequalities \eqref{Eqn:Bound2}--\eqref{Eqn:Bound3}. 
\end{proof}

\subsection{Instability of Cylinders with $0 < \rs < 1$}

\begin{thm}\label{Thm:Instability}
Let $\rs \in (0,1)$ and $\mu \in [1/2, 1]$ be fixed with $2 \mu + \alpha \notin \mathbb{Z}$. 
Then the equilibrium $\rs$ of \eqref{AMC}
is unstable in the topology of $h^{2 \mu + \alpha}(\Tone)$
for initial values in $h^{2 \mu + \alpha}(\Tone)$.
\end{thm}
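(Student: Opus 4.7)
The plan is to work in the reduced problem \eqref{ReducedAMC} with parameter $\eta = \rs$, where the lifting construction has already removed the neutral $0$--eigenvalue of $DG(\rs)$ that was obstructing direct dynamical--systems arguments. By \eqref{Eqn:DmGSpectrum},
\[
\sigma(D_1 \mathcal{G}(0, \rs)) = \{ \rs^{-2} - k^2 : k \in \mathbb{Z} \setminus \{ 0 \} \},
\]
and because $0 < \rs < 1$ implies $\rs^{-2} > 1$, the eigenvalue $\rs^{-2} - 1 > 0$ lies strictly in the open right half--plane. Moreover, the spectrum consists of isolated real points which are bounded above with a clear spectral gap separating the (finitely many) positive eigenvalues from the negative ones. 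The corresponding unstable eigenfunctions are the Fourier modes $\cos(kx), \sin(kx)$ with $1 \leq |k| < \rs^{-1}$, which lie in $F_1 = h_0^{2 + \alpha}(\Tone)$.

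The first step is to verify, as was done in Theorem~\ref{Thm:Stability}, that $D_1 \mathcal{G}(0,\rs)$ is a lower--order perturbation of $\partial_x^2$ and hence generates a strongly continuous analytic semigroup on $F_0$ with domain $F_1$, and that its spectrum splits into a finite unstable part and a stable part with a gap. The second step is to invoke the principle of linearized instability for quasilinear parabolic problems in the continuous maximal regularity framework (see, e.g., the instability theorems in Lunardi's book, or the quasilinear analogues in the Cl\'ement--Simonett and Pr\"uss--Simonett circle of results on which Lemma~\ref{Lem:AandFRegularity} already relies). This yields $\varepsilon_0 = \varepsilon_0(\rs) > 0$ and, for any neighborhood $W$ of $0$ in $F_\mu$, an initial datum $\tr_0 \in W$ such that the solution $\tr(\cdot, \tr_0)$ of \eqref{ReducedAMC} with $\eta = \rs$ exits $\mathbb{B}_{F_\mu}(0, \varepsilon_0)$ in finite time; concretely, one projects the reduced flow onto the unstable eigenspace and uses the spectral gap together with a variation--of--constants estimate to show that the unstable component grows exponentially while the stable and nonlinear contributions remain controlled.

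The third step is to transfer instability from \eqref{ReducedAMC} back to \eqref{AMC} via the lifting $\psi_{\star}(\cdot, \rs)$. By Remarks~\ref{Rem:PsiProperties}(c)--(d), $\psi_{\star}(0, \rs) = \rs$ and $D_1 \psi_{\star}(0,\rs) = \mathrm{id}_{F_\mu}$, so $\psi_{\star}(\cdot, \rs)$ is a real--analytic diffeomorphism from a neighborhood of $0$ in $F_\mu$ (inside $U_0$) onto a neighborhood of $\rs$ in the submanifold $\mathcal{M}_{\rs}^{2\mu + \alpha}$. Combined with the $Q_0$--direction parametrized trivially by $\eta$, this realizes a neighborhood of $\rs$ in $E_\mu$ as the image of a neighborhood of $(0, \rs)$ in $F_\mu \times \mathbb{R}$ under a bi--Lipschitz map. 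Given any neighborhood $V$ of $\rs$ in $E_\mu$, choose $W$ small enough that $\psi_{\star}(W, \rs) \subset V$; the escape of $\tr(t, \tr_0)$ from $\mathbb{B}_{F_\mu}(0, \varepsilon_0)$, promoted by Lemma~\ref{Lem:PsiLifts} to a solution $r(t) = \psi_{\star}(\tr(t, \tr_0), \rs)$ of \eqref{AMC} starting $\varepsilon$--close to $\rs$, therefore forces $r$ to leave a fixed $E_\mu$--neighborhood of $\rs$.

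The main technical obstacle is the precise invocation of the principle of linearized instability in the continuous maximal regularity / $BU\!C_{1-\mu}$ setting. Once the hyperbolic spectral decomposition of $D_1\mathcal{G}(0,\rs)$ is in hand together with the associated analytic semigroup, the construction of a local unstable manifold (or equivalently a direct Duhamel argument on the unstable mode) is classical; the remaining lifting step is purely a change of coordinates and involves no new analytic difficulty beyond the estimates already used in the proof of Theorem~\ref{Thm:Stability}.
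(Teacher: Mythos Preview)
Your strategy is sound but differs from the paper's in one structural respect: you pass to the reduced problem \eqref{ReducedAMC} to eliminate the neutral eigenvalue $0$ before running the spectral--gap/Duhamel instability argument, and then lift back via $\psi_{\star}$. The paper instead works directly with the full problem \eqref{AMC} and the linearization $L = DG(\rs)$ on $E_\mu$, keeping the eigenvalue $0$ in the picture. The point is that for \emph{instability} (unlike stability) the $0$--eigenvalue is harmless: the paper chooses $\omega,\gamma > 0$ so that $\sigma(L)$ splits as $\sigma_+ \subset [{\rm Re}\,z > \omega+\gamma]$ and $\sigma_- \subset [{\rm Re}\,z < \omega - \gamma]$, with $0 \in \sigma_-$; the maximal regularity estimate \eqref{estimate-stable} for $(\omega - L_-)$ then goes through because $type(-\omega + L_-) < 0$ regardless of the $0$--eigenvalue. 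The contradiction is reached by assuming stability, deriving the representation \eqref{P-plus-formula} for $P_+ r$, and showing this forces $\|P_+(r_0 - \rs)\|_{E_\mu} \le C\|P_-(r_0 - \rs)\|_{E_\mu}$, which fails for any $r_0 \ne \rs$ with $r_0 - \rs \in P_+(E_0)$. Your route buys conceptual cleanliness (a genuinely hyperbolic linearization on $F_\mu$) at the cost of the lifting step and a slightly more delicate transfer argument; the paper's route is more self--contained and avoids both the reduction and the appeal to an abstract linearized--instability theorem, since it writes out the variation--of--constants estimates explicitly.
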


\begin{proof}
{\bf (i)}
Let $\rs \in (0,1)$ be fixed, and let $L := DG(\rs)$.
We can restate the evolution equation \eqref{AMC} 
in the following equivalent form
\begin{equation}
\label{reduced-2}
\begin{cases}
\rho_t - L \rho = g(\rho), &\text{$t > 0$} \\ \rho(0) = r_0 - \rs,
\end{cases}
\end{equation}
where $g(\rho) := G(\rho + \rs) - L \rho$.
Using the quasilinear structure of $G$ 
it is not difficult to see that for every $\beta > 0$ there exists a 
number $\varepsilon_0 = \varepsilon_0(\beta) > 0$ so that
\begin{equation}
\label{estimate-g}
\|g(\rho)\|_{E_0} \leq \beta \| \rho \|_{E_1}, \qquad \text{for all} \quad 
\rho \in \mathbb{B}_{E_{\mu}}(0, \varepsilon_0) \cap E_1.
\end{equation}
It follows from \eqref{Eqn:SpectrumDG} that
\begin{equation*}
\sigma(L) \cap [{\rm Re} \, z > 0] \ne \emptyset,
\end{equation*}
and we may choose numbers $\omega, \gamma > 0$ such that
\begin{equation*}
[\omega - \gamma \le {\rm Re} \,z \le \omega + \gamma] \cap \sigma(L) = \emptyset
\quad \text{and} \quad
\sigma_+ := [{\rm Re} \, z > \omega + \gamma] \cap \sigma(L) \ne \emptyset \, .
\end{equation*}

Define $P_+$ to be the spectral projection, in $E_0$, 
with respect to the spectral set $\sigma_+$,
and let $P_- := 1 - P_+$. 
Then $P_+(E_0)$ is finite dimensional 
and the topological decomposition  
\[
E_0 = P_+(E_0) \oplus P_-(E_0)
\]
reduces $L$. Hence, $L = L_+ \oplus L_-$,
where $L_\pm$ is the part of $L$ in $P_{\pm}(E_0)$, respectively,
with the domains $D(L_\pm) = P_\pm(E_1)$. 
Moreover, $P_{\pm}$ decomposes $E_1$ 
and, without loss of generality, we can take the norm on $E_1$
so that $ \| v \|_{E_1} = \| P_+ v \|_{E_1} + \| P_- v \|_{E_1}.$
Note that
\begin{equation*}
\sigma(L_-) \subset [{\rm Re} \, z < \omega - \gamma],
\qquad 
\sigma(L_+) = \sigma^+ \subset [{\rm Re} \, z > \omega + \gamma],
\end{equation*}
which implies there is a constant $M_0 \ge 1$ such that
\begin{equation}
\label{estimate-sg}
\begin{split}
\|e^{L_{-} t} P_{-} \|_{\mathcal{L}(E_0)} &\le M_0 e^{(\omega - \gamma) t}, \\
\|e^{-L_{+}t} P_{+} \|_{\mathcal{L}(E_0)} &\le M_0 e^{-(\omega + \gamma) t}, \qquad t \ge 0
\end{split}
\end{equation}
where $\{ e^{L_{-}t} : t \ge 0 \}$ is the analytic semigroup in $P_-(E_0)$ generated by $L_{-}$ and
$\{ e^{L_{+}t} : t \in \mathbb{R} \}$ is the group in $P_+(E_0)$
generated by the bounded operator $L_+$.

From \cite[Theorem 5.2]{LeC11} one sees that 
$\big(\mathbb{E}_0(J), \mathbb{E}_1(J) \big)$
is a pair of maximal regularity for $-L$ and 
it is easy to see that $-L_-$ inherits the property of maximal regularity.
In particular, the pair $\big( P_- (\mathbb{E}_0(J)), P_-(\mathbb{E}_1(J)) \big)$
is a pair of maximal regularity for $-L_-$.
In fact, since $type( - \omega + L_{-} ) < - \gamma < 0$ 
we see that $\big( P_-(\mathbb{E}_0(\mathbb{R}_+)), P_-(\mathbb{E}_1(\mathbb{R}_+)) \big)$
is a pair of maximal regularity for $(\omega - L_-)$.
This, in turn, implies the a priori estimate
\begin{equation}
\label{estimate-stable}
\| e^{ - \omega t} w \|_{\mathbb{E}_1(J_T)}
\le M_1 \Big( \| w_0 \|_{E_{\mu}} + \| e^{- \omega t} f \|_{\mathbb{E}_0(J_T)}\Big)
\end{equation}
for $J_T := [0,T]$, any $T \in (0, \infty)$
(or $J_T = \mathbb{R}_+$ for $T = \infty$), 
with a universal constant $M_1 > 0$, where
$w$ is a solution of the linear Cauchy problem
\begin{equation*}
\begin{cases} \dot{w} - L_- w = f,\\ w(0) = w_0, \end{cases} \qquad \text{with} \quad (f, w_0) \in \Big( C\big( (0,T), P_- E_0 \big), P_- E_0 \Big).
\end{equation*}

\medskip

{\bf (ii)} By way of contradiction, suppose that $\rs$ is stable for \eqref{AMC}.
Then for every $\varepsilon > 0$ there exists a number $\delta > 0$ such that 
\eqref{reduced-2} admits
for each $r_0 \in \mathbb{B}_{E_{\mu}}(\rs, \delta)$ a global solution
$
r = r(\cdot, r_0) \in C^1_{1 - \mu}(\mathbb{R}_+, E_0) \cap C_{1 - \mu}(\mathbb{R}_+, E_1),
$
which satisfies 
\begin{equation}
\label{less-eps}
\| r(t) \|_{E_{\mu}} < \varepsilon, \qquad t \ge 0 \, .
\end{equation}
We can assume without loss of generality that $\beta$ and $\varepsilon$ are chosen such that
\begin{equation}
\label{beta}
2 C_0(M_0 + M_1 \gamma) \beta \le \gamma \qquad \text{and} \qquad \varepsilon \le \varepsilon_0(\beta),
\end{equation}
where $C_0 := \max \{\| P_- \|_{\mathcal{L}(E_0)}, \| P_+ \|_{\mathcal{L}(E_0)} \}$.
As $P_+(E_0)$ is finite dimensional, we may also assume that 
\begin{equation*}
\|P_+ v \|_{E_{\nu}} = \| P_+ v \|_{E_0}, \qquad v \in E_0, \quad \nu \in \{ \mu, 1 \},
\end{equation*} 
using the fact that $P_+ E_0 \subset D(L^n)$ for every $n \in \mathbb{N}$,
c.f. \cite[Proposition A.1.2]{LUN95}.

\medskip

{\bf CLAIM 1:} For any initial value $r_0 \in \mathbb{B}_{E_{\mu}}(\rs, \delta)$,
$P_+ r$ admits the representation
\begin{equation}
\label{P-plus-formula}
P_+ r(t) = - \int_t^{\infty} e^{L_+(t - s)} P_+ g( r(s) ) \, ds \qquad t \ge 0.
\end{equation}
For this we first establish that, for $r_0 \in \mathbb{B}_{E_{\mu}}(\rs, \delta)$,
\[
e^{- \omega t} r \in BC_{1 - \mu}(\mathbb{R}_+, E_1) := \left\{ u \in C((0, \infty), E_1): \sup_{t \in \mathbb{R}_+} t^{1 - \mu} \| u(t) \|_{E_1} < \infty \right\}.
\]
The mapping property 
$
g : \mathbb{E}_1(J_T) \rightarrow \mathbb{E}_0(J_T)
$ 
follows analogously to the mapping property derived for $\mathcal{G}$
in the proof of Theorem~\ref{Thm:Stability} above, $0 < T < \infty$. 
Together with the inequalities \eqref{estimate-g} and 
\eqref{estimate-stable}, this yields
\begin{equation}
\label{P-minus-1}
\begin{split}
&\| e^{-\omega t} P_{-} r \|_{B_{1 - \mu}(J_T, E_1)}
\le  M_1 \Big( \|P_{-} (r_0 - \rs) \|_{E_{\mu}}\\
& \qquad+ C_0 \beta \| e^{-\omega t} P_+ r \|_{B_{1 - \mu}(J_T, E_1)}
+ C_0 \beta \| e^{-\omega t} P_- r \|_{B_{1 - \mu}(J_T, E_1)}
\Big)
\end{split}
\end{equation}
for any $0 < T < \infty$.
Due to \eqref{beta}, we have $M_1 C_0 \beta \le 1/2$ and can further conclude
\begin{equation}
\label{estimate-Pminus-2}
\begin{split}
\| e^{-\omega t} P_{-} r \|_{B_{1 - \mu}(J_T, E_1)}
\le  2 M_1 \Big( \| P_{-} (r_0 - \rs) \|_{E_{\mu}}
+ C_0 \beta \| e^{-\omega t} P_+ r \|_{B_{1 - \mu}(J_T, E_1)}
\Big).
\end{split}
\end{equation}
It follows from \eqref{less-eps} that
\begin{equation*}
t^{1 - \mu} \| e^{-\omega t} P_+ r(t) \|_{E_1}
\le t^{1 - \mu} e^{-\omega t} C_0 \| r(t) \|_{E_{\mu}} \le C_0 C_1 \varepsilon
\end{equation*}
where $C_1 := \sup \{ t^{1 - \mu} e^{-\omega t} : t \ge 0 \} < \infty$.
Inserting this result into \eqref{estimate-Pminus-2} yields
\begin{equation}
\label{estimate-rho-1}
\| e^{-\omega t} r \|_{B_{1 - \mu}(J_T, E_1)}
\le 2 M_1 \| P_{-} (r_0 - \rs) \|_{E_{\mu}} + (2 M_1 C_0 \beta + 1) C_0 C_1 \varepsilon \le C_2
\end{equation}
for any $0 < T < \infty$.
However, since $T$ is arbitrary and 
\eqref{estimate-rho-1} is independent of $T$ we conclude that 
$e^{-\omega t} r \in BC_{1 - \mu}(\mathbb{R}_+, E_1)$, for any initial value 
$r_0 \in \mathbb{B}_{E_{\mu}} (\rs, \delta)$.
Next we note that, for $s \ge t$, by \eqref{estimate-sg}
\begin{equation}
\label{estimate-integral-1}
\begin{split}
\| e^{L_+(t-s)} P_+ g(r(s)) \|_{E_0}
&\le M_0 C_0 \beta e^{(\omega + \gamma)(t-s)} \| r(s) \|_{E_1}\\
&\le M_0 C_0 \beta e^{\omega t} e^{\gamma (t-s)} s^{\mu - 1} \| e^{-\omega s} r \|_{B_{1 - \mu}(\mathbb{R}_+, E_1)},
\end{split}
\end{equation}
by which the integral in \eqref{P-plus-formula} exists for $t \ge 0$,
convergence in $E_1$. Moreover, 
\begin{equation}
\label{estimate-integral-2}
\begin{split}
\left\| \int_t^{\infty} e^{L_+ (t-s)} P_+ g(r(s)) \, ds \right\|_{E_0}
\le e^{\omega t} M_0 C_0 C_3 \beta \| e^{-\omega t} r \|_{B_{1 - \mu}(\mathbb{R}_+, E_1)},
\end{split}
\end{equation}
where $C_3 := \sup \big\{ \int_t^{\infty} e^{\gamma (t-s)} s^{\mu - 1} \, ds : t \ge 0 \big\} < \infty$.
Noting that $w = P_+ r$ solves the Cauchy problem
\begin{equation*}
\begin{cases} \dot{w} - L_+ w = P_+ g(r),\\ w(0) = P_+ (r_0 - \rs), \end{cases}
\end{equation*}
it follows from the variation of parameters formula that, for $t \ge 0$ and $\tau > 0$,
\begin{equation*}
\label{plus-unstable}
P_+ r(t) =
e^{L_+ (t - \tau)} P_+ r(\tau) + \int_{\tau}^t e^{ L_+(t - s)} P_+ g(r(s)) \, ds.
\end{equation*}
This representation holds for any $\tau > 0$ and the claim 
follows from \eqref{estimate-sg} and \eqref{less-eps} by sending $\tau$ to $\infty$.

\medskip

{\bf CLAIM 2:} If $r_0 \in \mathbb{B}_{E_{\mu}}(\rs, \delta)$
and $\| r(t) \|_{E_{\mu}} < \varepsilon$ for all $t \geq 0$, 
then
\[
\| P_+ (r_0 - \rs) \|_{E_{\mu}} \leq 2 M_0 M_1 C_3 \| P_- (r_0 - \rs) \|_{E_{\mu}}.
\]
From \eqref{P-plus-formula} and \eqref{estimate-integral-1} follows
\begin{equation}
\label{P-plus-2}
\begin{split}
&\| e^{-\omega t} P_+ r \|_{B_{1 - \mu}(\mathbb{R}_+, E_0)}\\
&\le \frac{M_0 C_0 \beta}{\gamma} 
\Big( \| e^{-\omega t} P_+ r \|_{B_{1 - \mu}(\mathbb{R}_+, E_1)}
+ \| e^{-\omega t} P_- r \|_{B_{1 - \mu}(\mathbb{R}_+, E_1)} \Big)
\end{split}
\end{equation}
where we have used the fact that
$\sup_{t \ge 0} \big\{ t^{1 - \mu} \int_t^{\infty} e^{\gamma (t-s)} s^{\mu - 1} \, ds \big\} \le 1/\gamma$.
Adding the estimates in \eqref{P-minus-1} and \eqref{P-plus-2} and 
employing \eqref{beta} yields
\begin{equation}
\label{B}
\|e^{-\omega t} r \|_{B_{1 - \mu}(\mathbb{R}_+, E_1)} \le 2 M_1 \| P_- (r_0 - \rs) \|_{E_{\mu}}.
\end{equation}
The representation \eqref{P-plus-formula} in 
conjunction with \eqref{estimate-integral-2} and \eqref{B} then implies
\begin{equation}
\begin{split}
\|P_+ (r_0 - \rs) \|_{E_{\mu}}
\le M_0 C_0 C_3 \beta \| e^{-\omega t} r \|_{B_{1 - \mu}(\mathbb{R}_+, E_1)}
 \le M_0 C_3 \|P_- (r_0 - \rs) \|_{E_\mu},
\end{split}
\end{equation}
where the last inequality follows from $2 C_0 M_1 \beta \le 1$.
We have thus demonstrated the claim, and the theorem follows 
by way of contradiction. In particular, note that if $r_0 \in \mathbb{B}_{E_{\mu}}(\rs, \delta)$
is chosen with $\| P_- (r_0 - \rs) \|_{E_{\mu}} = 0$, then it must hold that 
$\| P_+ (r_0 - \rs) \|_{E_{\mu}} = 0$, so that $r_0 = \rs$, which contradicts
the stability assumption.
\end{proof}

\section{Bifurcation From Cylinders}\label{Sec:Bifurcation}

We conclude by investigating the interactions between the family of cylinders,
which we will informally consider as the \emph{trivial} equilibria of \eqref{AMC},
and the families of undulary curves, the \emph{non--trivial} equilibria. 
In particular, by restricting \eqref{ReducedAMC} to a problem on profile functions which
are symmetric about the $y$--axis (i.e. \emph{even} functions on $\Tone$), we observe
subcritical bifurcations from the family of cylinders which occur at the cylinder
of radius $1 / \ell$, for any $\ell \in \mathbb{N}$, where the reciprocal of the radius
$\rs$ is taken as a bifurcation parameter. Most of the terminology and notations 
employed in this section coincide with those of Kielh\"ofer \cite{KIE12}.

To begin, define the operator
\[
\bG(\tr, \lambda) := P_0 G(\psi_{\star}(\tr, \rs)), \qquad \lambda := 1 / \rs, \; \rs > 0,
\]
and we immediately note that
\[
\bG \in C^{\omega} \left( (h^{2 + \alpha}_{0,e}(\Tone) \cap U_0) \times (0, \infty), h^{\alpha}_{0, e}(\Tone) \right),
\]
where $F_{j, e} := h^{2j + \alpha}_{0, e}(\Tone)$ denotes the space
of even, zero--mean functions with $h^{2j + \alpha}$--regularity over $\Tone$, $j = 0, 1$.
We observe that $\bG (\cdot, \lambda)$ preserves symmetry about the $y$--axis, a property
that follows quickly from the definition \eqref{Eqn:G} of the governing operator $G$
and the representation of $\psi_{\star}$ derived in Remarks~\ref{Rem:PsiProperties}(f).
The fact that $G$ preserves even functions is easily noted by the fact that every
term of $G(r)$ is either constant on $\Tone$ due to integration, 
or depends only on $r$, $r_{xx}$, and $r_x^2$, which are each even
functions on $\Tone$ when $r$ is itself taken to be even on $\Tone$. 

\begin{thm}\label{Thm:Bifurcation}
Fix $\ell \in \mathbb{N}$. Then $(0, \ell)$ is a bifurcation point for the equation
\begin{equation}\label{BifurcEqn}
\bG(\tr, \lambda) = 0, \qquad (\tr, \lambda) \in h^{2 + \alpha}_{0, e}(\Tone) \times (0, \infty).
\end{equation}
In particular, there exists a positive
constant $\delta_{\ell} > 0$ and a nontrivial analytic curve
\begin{equation}\label{Eqn:BifurcatingBranch}
\left\{ (\tr_{\ell}(s), \lambda_{\ell}(s)) \in h^{2 + \alpha}_{0,e}(\Tone) \times \mathbb{R} : s \in (- \delta_{\ell}, \delta_{\ell}), (\tr_{\ell}(0), \lambda_{\ell}(0)) = (0, \ell) \right\},
\end{equation}
such that 
\[
\bG(\tr_{\ell}(s), \lambda_{\ell}(s)) = 0 \qquad \text{for all} \quad s \in (-\delta_{\ell}, \delta_{\ell}),
\]
and all solutions of \eqref{BifurcEqn} in a neighborhood of $(0, \ell)$ are either a
trivial solution $(0, \lambda)$ or an element of the nontrivial curve \eqref{Eqn:BifurcatingBranch}.
Moreover, if $\lambda \in \mathbb{R}_+ \setminus \mathbb{N}$, then $(0, \lambda)$ is not a bifurcation
point for \eqref{BifurcEqn}. We can further conclude that
\begin{enumerate}
	\item lifting the curve \eqref{Eqn:BifurcatingBranch}, via $\psi_{\star}$, we get elements of the family
	of $2 \pi / \ell$--periodic undulary curves with parameter values $|B| < \tilde{\delta}_{\ell}$
	for some $\tilde{\delta}_{\ell} > 0$.
		
	\item the bifurcation is a subcritical pitchfork type bifurcation. More precisely, for all $\ell \in \mathbb{N}$,
	we have 
	\[
	\dot{\lambda}_{\ell}(0) = 0 \quad \text{and} \quad \ddot{\lambda}_{\ell}(0) < 0,
	\]
	where ``$\; \, \dot{ } \;$'' denotes differentiation with respect to the parameter $s$. 
	
	\item the bifurcating branch of undulary curves are unstable equilibria of \eqref{AMC},
	at least for parameter values $|B| < \tilde{\delta}_{\ell}$ sufficiently small.
\end{enumerate}
\end{thm}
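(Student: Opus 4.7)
The plan is to apply the analytic Crandall--Rabinowitz bifurcation theorem (c.f. \cite{KIE12}) to $\bG$ at each point $(0, \ell)$. Starting from \eqref{Eqn:DmGat0}, direct computation in the Fourier cosine basis gives
\begin{equation*}
D_1 \bG(0, \lambda) \trho = \sum_{k \geq 1} (\lambda^2 - k^2) \hat{\rho}(k) \cos(kx), \qquad \trho \in h^{2 + \alpha}_{0,e}(\Tone),
\end{equation*}
which is Fredholm of index zero, invertible precisely when $\lambda \notin \mathbb{N}$. Invertibility combined with the implicit function theorem immediately rules out bifurcation from the trivial branch for $\lambda \in \mathbb{R}_+ \setminus \mathbb{N}$. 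At $\lambda = \ell$, the kernel is spanned by $\cos(\ell x)$ and the range is the codimension-one subspace where the $\ell$-th cosine coefficient vanishes. The transversality condition amounts to noting that
\[
D_\lambda D_1 \bG(0, \ell) \cos(\ell x) = 2 \ell \cos(\ell x) \notin R(D_1 \bG(0, \ell)),
\]
so the analytic Crandall--Rabinowitz theorem produces the branch \eqref{Eqn:BifurcatingBranch} together with its local uniqueness statement.

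For part (a), the identity $\int_{\Tone} r\, G(r) \, dx = 0$ (established in the volume--preservation computation) combined with $P_0 G(r) = 0$ forces $G(r) \equiv 0$, since $r$ is strictly positive on $\Tone$. Hence each $r_{\ell}(s) := \psi_{\star}(\tr_{\ell}(s), 1/\lambda_{\ell}(s))$ is a true equilibrium and therefore, by Section~\ref{Sec:Equilibria}, a $2 \pi/k$-periodic unduloid for some $k \in \mathbb{N}$. Since the Kenmotsu parametrization analytically provides a nontrivial branch of equilibria bifurcating from $(0, \ell)$ in the direction $\cos(\ell x)$ precisely for $k = \ell$, local uniqueness forces $k = \ell$ for small $|s|$.

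For part (b), $\dot{\lambda}_{\ell}(0) = 0$ follows from a $\mathbb{Z}_2$-symmetry: the translation $T_{\pi / \ell}$ commutes with $\bG(\cdot, \lambda)$, preserves the space of $2 \pi / \ell$-periodic even functions, and acts by $-1$ on the kernel spanned by $\cos(\ell x)$; local uniqueness then yields $\lambda_{\ell}(-s) = \lambda_{\ell}(s)$. The sign $\ddot{\lambda}_{\ell}(0) < 0$ is obtained by Taylor expanding Kenmotsu's parametrization in $B$: the expansion of \eqref{Eqn:HandBRelation} with $k = \ell$ gives $\mathcal{H}(B) = \ell - \tfrac{\ell}{4} B^2 + O(B^3)$, while expressing the enclosed volume $F$ as an integral over one period of the unduloid yields $F = \tfrac{2\pi}{\ell^2}\bigl(1 + \tfrac{3}{2} B^2\bigr) + O(B^3)$; imposing $F = 2 \pi / \lambda^2$ produces $\lambda(B) = \ell - \tfrac{3 \ell}{4} B^2 + O(B^3)$, and since $B(s)$ is an analytic reparametrization with $\dot{B}(0) \neq 0$ we obtain the claimed negativity. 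Part (c) then follows from the principle of exchange of stability for subcritical pitchforks (c.f. \cite{KIE12}): the simple eigenvalue of $D_1 \bG(0, \lambda)$ crossing zero transversally at $\lambda = \ell$ perturbs to an eigenvalue of the linearization along the nontrivial branch with sign opposite to that of $\ddot{\lambda}_{\ell}(0)$, hence positive, rendering the bifurcating unduloids linearly unstable in the reduced problem \eqref{ReducedAMC}, which by Lemma~\ref{Lem:PsiLifts} lifts back to instability in \eqref{AMC}.

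The main technical obstacle is the volume expansion in part (b): one must perform the change of variables between the arclength parameter $s$ and the profile variable $x$ on $\Tone$ via $x'(s) = (1 + B \sin(\mathcal{H} s))/\sqrt{1 + B^2 + 2 B \sin(\mathcal{H} s)}$, while keeping track of the implicit $B$-dependence in $\mathcal{H}$. The secondary verifications---uniqueness within the $2\pi/\ell$-periodic even subspace, $\dot{B}(0) \neq 0$, and the precise exchange-of-stability formula---are essentially bookkeeping given the Crandall--Rabinowitz framework already in place.
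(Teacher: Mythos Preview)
Your setup for Crandall--Rabinowitz matches the paper's, but parts (a)--(c) take a genuinely different path. For (a) the paper invokes the identity $G(r) = D_1\psi_\star(P_0 r,\eta)\, P_0 G(r)$ from \eqref{Eqn:GTangent}; your argument via $\int_\Tone r\,G(r)\,dx = 0$ combined with $P_0 G(r) = 0$ is cleaner, since a constant function with vanishing $r$-weighted integral and $r>0$ must be zero. For (b) the paper computes $D^2_{11}\bG(0,\ell)[\hat v_0,\hat v_0]$ explicitly, checks it lies in $R_\ell$ to obtain $\dot\lambda(0)=0$, and then evaluates Kielh\"ofer's formula \cite[(I.6.11)]{KIE12} to get $\ddot\lambda(0) = -\ell^3$. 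Your route---expanding the Kenmotsu unduloids in $B$ and reading $\lambda(B)$ off the volume constraint $F = 2\pi/\lambda^2$---bypasses those derivative computations at the cost of the arclength change of variables you flag. Both reach the same conclusion; yours exploits the prior classification of equilibria, the paper's stays internal to $\bG$.

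One genuine gap: the translation $T_{\pi/\ell}$ does \emph{not} preserve $h^{2+\alpha}_{0,e}(\Tone)$---a generic even $2\pi$-periodic function is no longer even after a shift by $\pi/\ell$ unless it is also $2\pi/\ell$-periodic. So your $\mathbb{Z}_2$-symmetry argument for $\dot\lambda(0)=0$ does not go through in the ambient space where Crandall--Rabinowitz is being applied. You can repair this by first arguing that the bifurcating branch lies in the $2\pi/\ell$-periodic even subspace (which $T_{\pi/\ell}$ does preserve) and invoking local uniqueness, or simply drop the symmetry argument, since your Kenmotsu expansion already yields $\dot\lambda(0)=0$ once $\dot B(0)\neq 0$ is checked (this follows from the leading-order profile $r(x)-1/\ell \sim (B/\ell)\cos(\ell x)$). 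A smaller point in (c): Lemma~\ref{Lem:PsiLifts} lifts solutions, not spectra; the paper instead uses the intertwining relation \eqref{Eqn:GLinear2} to show that an eigenvalue of $D_1\bG$ at the reduced equilibrium is also an eigenvalue of $DG$ at the lifted one, and then argues instability as in Theorem~\ref{Thm:Instability}. That is what your lifting step actually requires.
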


\begin{proof}
By working in the setting of even functions on $\Tone$, we can take advantage of the 
cosine--Fourier series representation
\[
\tr(x) = \sum_{k \ge 1} \hat{\tr}(k) \cos (kx), \qquad \text{for all} \quad \tr \in h^{\sigma}_{0, e}(\Tone),
\] 
where $\hat{\tr}(k) = \frac{1}{\pi} \int_{\Tone} \tr(x) \cos (kx) dx$ are the cosine--Fourier
coefficients of $\tr \in h^{\sigma}_{0, e}(\Tone),$ $k \ge 1$. It follows easily from \eqref{Eqn:DmGat0} 
that the linearization $D_1 \bG(0, \lambda)$ is a Fourier multiplier with symbol
\[
( M_k )_{k \ge 1} := \left( \lambda^2 - k^2 \right)_{k \ge 1}.
\]
We then see that $D_1 \bG(0, \lambda)$ is bijective from
$F_{1, e}$ onto $F_{0, e}$ when  
$\lambda \in \mathbb{R}_+ \setminus \mathbb{N}$,
so that only points of the form $(0, \ell)$ can 
possibly be bifurcation points for \eqref{BifurcEqn}.

Proceeding to verify that $(0, \ell)$ is indeed a bifurcation point, 
we compute the kernel and range of $D_1 \bG(0, \ell)$ as
\[
N_{\ell} = \text{span} \{ \cos (\ell x) \} \quad \text{and} \quad R_{\ell} = \overline{\text{span}} \{ \cos (kx): k \ge 1, k \not= \ell \},
\]
respectively. Since $h^{\sigma}(\Tone) \hookrightarrow L_2(\Tone)$, 
we can borrow the $L_2$-inner product to realize $N_{\ell}$ as a 
topological complement to $R_{\ell}$ as subspaces of $F_{1,e}$.
By compactness of the resolvent $R(\lambda) := (\lambda - DG(\rs))^{-1}$, 
$\lambda \in \rho(DG(\rs))$, it follows that 
$D_1 \bG(0, \ell)$ is a Fredholm operator of index zero.
Then, defining the element $\hat{v}_0 := \cos(\ell x),$ and noting that
\[
D^2_{1 2} \bG(0, \ell) \hat{v}_0 = - 2 \ell P_0 \hat{v}_0 = - 2 \ell \cos(\ell x) \notin R_{\ell},
\]
it follows by \cite[Theorem 1.7]{CR71}, or \cite[Theorem I.5.1]{KIE12},
that \eqref{BifurcEqn} bifurcates at $(0, \ell)$, which proves the main
part of the theorem.

{\bf (a)} Notice that $\psi_{\star}(0, \ell)$ is precisely the constant
function $\rs = 1 / \ell$, and, utilizing the explicit characterization
of equilibria from Section~\ref{Sec:Equilibria} above, we know that a curve
of non--trivial equilibria for \eqref{AMC} containing this function must coincide with the 
family of $\frac{2 \pi}{\ell}$--periodic unduloids. Hence, it suffices to show
that $\bG(\tr, \lambda) = 0$ if and only if $G(\psi_{\star}(\tr, \lambda^{-1})) = 0$,
i.e. $\psi_{\star}(\tr, \lambda^{-1})$ is an equilibrium of \eqref{AMC} if 
and only if $(\tr, \lambda)$ solves \eqref{BifurcEqn}. However, this follows 
immediately from the relation
\begin{equation}\label{Eqn:GLinear1}
G(r) = D_1 \psi_{\star}(P_0 r, \eta) P_0 G(r), \qquad r \in E_1,
\end{equation}
which was justified in deriving \eqref{Eqn:GTangent} above.

{\bf (b)} We will follow the characterization of bifurcation types as developed
in \cite[Sections I.6 and I.7]{KIE12}. Computing the second derivative
\begin{align*}
D^2_{1 1} \bG(0, \ell) [\hat{v}_0, \hat{v}_0] &= P_0 \bigg( 2 \ell^4 Q_0(\cos^2 (\ell x)) - 2 \ell^3 P_0 (\cos^2 (\ell x)) - \ell^4 Q_0(\sin^2 (\ell x))\\
& \qquad - \ell^3 \sin^2 (\ell x) + 2 \ell^3 Q_0( \cos^2 (\ell x)) \bigg)\\
&= - \ell^3 P_0 \left( \frac{ \cos(2 \ell x) + 1}{2} \right) = - \ell^3 / 2 \; \cos(2 \ell x),
\end{align*}
we note that $- \ell^3 / 2 \; \cos(2 \ell x) \in R_{\ell}$, from which it follows
that $\dot{\lambda}(0) = 0$. 
Meanwhile, utilizing the representation \cite[(I.6.11) and (I.6.9)]{KIE12} 
and following a considerable amount of computation, one will see that
\[
\ddot{\lambda}(0) = -\frac{1}{3} \frac{\langle 6 \ell^4 \hat{v}_0, \hat{v}_0 \rangle}{\langle 2 \ell \hat{v}_0, \hat{v}_0 \rangle} = - \ell^3 < 0,
\]
where $\langle \cdot, \cdot \rangle$ denotes the inner product on $L^2(\Tone)$,
within which $h^{\sigma}_{0, e}(\Tone)$ is embedded.

{\bf (c)} 
We can now track the so--called
{\em critical eigenvalue} $\mu_{\ell}(\lambda)$ of the linearization $D_1 \bG(0, \lambda)$,
which is the eigenvalue which passes through $0$ at $\lambda = \ell$, with non--vanishing speed,
as guaranteed by the observed bifurcation. Moreover, via eigenvalue perturbation techniques, 
we can also track the perturbed eigenvalues $\hat{\mu}_{\ell}(s)$ of the linearization
$D_1 \bG(\tr_{\ell}(s), \lambda_{\ell}(s))$. Then, by taking a derivative of the 
relation \eqref{Eqn:GLinear1}, we observe that 
\begin{equation}\label{Eqn:GLinear2}
D G(\psi_{\star}(\tr, \eta)) D_1 \psi_{\star}(\tr, \eta) \tilde{h} = D_1 \psi_{\star}(\tr, \eta) D_1 \mathcal{G}(\tr, \eta) \tilde{h}, \qquad \tilde{h} \in F_1,
\end{equation}
from which one will easily conclude that $\hat{\mu}(s)$ must also be an eigenvalue of the
linearization $D G(\psi_{\star}(\tr_{\ell}(s), \lambda_{\ell}^{-1}(s))$.
Finally, by the subcritical structure of the bifurcation, 
we conclude that, for sufficiently small $|s| < \delta_{\ell}$, the perturbed
eigenvalue $\hat{\mu}_{\ell}(s)$ has positive real part, from which instability follows by
a similar argument to Theorem~\ref{Thm:Instability} above.   
\end{proof}

\section*{Acknowledgements}
\noindent I would like to thank Professor Gieri Simonett for introducing me to geometric
evolution equations and for providing many helpful conversations and suggestions
in the preparation of this manuscript.

\end{document}